\newtheorem{theorem}{Theorem}[section]
\newtheorem{lemma}[theorem]{Lemma}
\newtheorem{definition}[theorem]{Definition}
\theoremstyle{definition}
\newtheorem{exmp}[theorem]{Example}
\newtheorem{alg}[theorem]{Algorithm}
\newcommand{\rvline}{\hspace*{-\arraycolsep}\vline\hspace*{-\arraycolsep}}
\newcommand{\reff}[1]{(\ref{#1})}
\numberwithin{equation}{section}
\def\mA{\mathcal{A}}
\def\mF{\mathcal{F}}
\def\mH{\mathcal{H}}
\def\mX{\mathcal{X}}
\def\mE{\mathcal{E}}
\def\eps{\epsilon}
\def\bbC{\mathbb{C}}
\def\bbF{\mathbb{F}}
\def\bold{\mathbf}
\newcommand{\rank}{\operatorname{rank}}
\newcommand{\be}{\begin{equation}}
\newcommand{\ee}{\end{equation}}
\newcommand{\beq}{\begin{equation}}
\newcommand{\eeq}{\end{equation}}
\newcommand{\baray}{\begin{array}}
\newcommand{\earay}{\end{array}}
\newcommand{\bbm}{\begin{bmatrix}}
\newcommand{\ebm}{\end{bmatrix}}
\begin{document}

\title[Low Rank Tensor Decompositions and Approximations]
{Low Rank Tensor Decompositions and Approximations}

\author{Jiawang Nie, Li Wang, and Zequn Zheng}
\address{Jiawang Nie and Zequn Zheng
Department of Mathematics, University of California San Diego,
9500 Gilman Drive, La Jolla, CA, USA, 92093.}
\address{Li Wang
	Department of Mathematics, University of Texas at Arlington,
	411 South Nedderman Drive, Arlington, TX, 76019.}
\email{njw@math.ucsd.edu, li.wang@uta.edu, zez084@ucsd.edu}

\subjclass[2010]{15A69,65F99}

\keywords{tensor, decomposition, rank, approximation,
generating polynomial}

\begin{abstract}
There exist linear relations among tensor entries of low rank tensors.
These linear relations can be expressed by multi-linear polynomials,
which are called generating polynomials.
We use generating polynomials to
compute tensor rank decompositions and low rank tensor approximations.
We prove that this gives a quasi-optimal
low rank tensor approximation
if the given tensor is sufficiently close to a low rank one.
\end{abstract}

\maketitle

\section{Introduction}
Let $m$ and $n_1, \ldots, n_m$ be positive integers.
A tensor $\mF$ of order $m$ and dimension $n_1 \times \cdots \times n_m$
can be labelled such that
\[
\mF  = (\mF_{i_1,\ldots,i_m})_{1\leq i_1 \leq n_1, \ldots, 1 \leq i_m \leq n_m}.
\]
Let $\bbF$ be a field (either the real field $\mathbb{R}$ or the complex field $\mathbb{C}$).
The space of all tensors of order
$m$ and dimension $n_1 \times \cdots \times n_m$
with entries in $\bbF$, is denoted as
$\bbF^{n_1\times\cdots\times n_m}$.
For vectors $v_i \in \mathbb{F}^{n_i}$, $i=1,\ldots,m$,
their outer product $v_1 \otimes \ldots \otimes v_m$
is the tensor in $\mathbb{F}^{n_1\times\cdots\times n_m}$  such that
\[
(v_1 \otimes \cdots \otimes v_m)_{i_1,\ldots,i_m}
\, = \, (v_1)_{i_1} \cdots (v_m)_{i_m}
\]
for all labels in the corresponding range. A tensor like
$v_1 \otimes \ldots \otimes v_m$ is called a rank-$1$ tensor.
For every tensor $\mF \in \mathbb{F}^{n_1\times\cdots\times n_m}$,
there exist vector tuples $(v^{s,1}, \ldots, v^{s,m})$,
$s=1,\ldots,r$,~$v^{s,j} \in \bbF^{n_j}$, such that
\begin{align}   \label{eq:ransum_pure}
\mF = \sum_{s=1}^r v^{s,1} \otimes \cdots \otimes v^{s, m}.
\end{align}
The smallest such $r$ is called the $\bbF$-rank of $\mF$,
for which we denote $\text{rank}_{\bbF}(\mF)$.
When $r$ is minimum, the equation \reff{eq:ransum_pure}
is called a rank-$r$ decomposition over the field $\mathbb{F}$.
In the literature, this rank is sometimes referenced as
the candecomp-parafac (CP) rank.
We refer to \cite{Land12,Lim13} for various notions of tensor ranks.
Recent work for tensor decompositions can be found in
\cite{BreVan18,dLMV04,dLa06,Domanov2014,GPSTD,SvBdL13,telen2021normal}.
Tensors are closely related to polynomial optimization
\cite{CDN14,FNZ18,nie2014semidefinite,NieZhang18,NYZ18}.
Various applications of tensors can be found in
\cite{guo2021learning,kolda2009tensor,nie2020hermitian}.
Throughout the paper, we use the Hilbert-Schmidt norm for tensors:
\[
\| \mF \| = \sqrt{
\sum_{\substack{ 1\leq i_j \leq n_j, 1 \le j \le m } }
|\mF_{i_1,\ldots,i_m}|^2
} .
\]

The low rank tensor approximation (LRTA) problem is to find a low rank tensor
that is close to a given one.
This is equivalent to a nonlinear least square optimization problem.
For a given tensor $\mF \in  \mathbb{F}^{n_1\times\cdots\times n_m}$
and a given rank $r$, we look for $r$ vector tuples
$v^{(s)}: = (v^{s,1}, \ldots, v^{s,m})$, $s=1,\ldots,r$, such that
\[
\mF  \, \approx \, \sum\limits_{s = 1}^r v^{s,1} \otimes \cdots \otimes v^{s, m},
\quad  v^{s,j} \in \mathbb{F}^{n_j} .
\]
This requires to solve the following nonlinear least squares optimization
\begin{align} \label{nonlinear:least:square:F}
\min\limits_{ v^{(s,j)}  \in  \mathbb{F}^{n_j}, \, j=1,\ldots,m  }
\big\|\mF - \sum\limits_{s = 1}^r v^{s,1} \otimes \cdots \otimes v^{s, m}\big\|^2.
\end{align}
When $r = 1$, the best rank-$1$ approximating tensor always exists
and it is equivalent to computing the spectral norm
\cite{FriWang20,QiHu19}.
When $r>1$, the best rank-$r$ tensor approximation may not exist \cite{de2008tensor}.
Classically used methods for solving low rank tensor approximations are
the alternating least squares (ALS) method \cite{CLdA09,MYY20,YYang20},
higher order power iterations \cite{de2000best},
semidefinite relaxations \cite{CDN14,nie2014semidefinite},
SVD based methods \cite{guan2018convergence},
optimization based methods \cite{FriTam14,SvBdL13}.
We refer to \cite{BreVan18,LowRankSymmetric}
for recent work on low rank tensor approximations.
%
%

\subsection*{Contributions}

In this paper, we extend the generating polynomial method in
\cite{GPSTD,LowRankSymmetric}
to compute tensor rank decompositions and low rank tensor approximations
for nonsymmetric tensors.
First, we estimate generating polynomials by solving linear least squares.
Second, we find their approximately common zeros,
which can be done by computing eigenvalue decompositions.
Third, we get a tensor decomposition from their common zeros,
by solving linear least squares. To find a low rank tensor approximation,
we first apply the decomposition method to obtain a low rank approximating tensor
and then use nonlinear optimization methods to improve the approximation.
Our major conclusion is that if the tensor to be approximated is sufficiently close
to a low rank one, then the obtained low rank tensor is a quasi-optimal low rank approximation.
The proof is based on perturbation analysis of linear least squares and eigenvalue decompositions.

%
The paper is organized as follows. In Section~\ref{sec:pre},
we review some basic results about tensors.
In Section~\ref{sc:gp}, we introduce the concept of generating polynomials
and study their relations to tensor decompositions.
In Section~\ref{sec:lowTD}, we give an algorithm for computing
tensor rank decompositions for low rank tensors.
In Section~\ref{sec:lowTA}, we give an algorithm for computing low rank approximations.
The approximation error analysis is also given.
In Section~\ref{sec:num}, we present numerical experiments.
Some conclusions are made in Section~\ref{sec:con}.

\section{preliminary}
\label{sec:pre}

\subsection*{Notation}
The symbol $\mathbb{N}$ (resp., $\mathbb{R}$, $\mathbb{C}$)
denotes the set of nonnegative integers (resp., real, complex numbers).
For an integer $r>0$, denote the set $[r] \coloneqq \{1, \ldots, r\}$.
Uppercase letters (e.g., $A$) denote matrices,
$A_{ij}$ denotes the $(i,j)$th entry of the matrix $A$,
and Curl letters (e.g., $\mathcal{F}$) denote tensors,
$\mathcal{F}_{i_1,...,i_m}$ denotes the $(i_1,...,i_m)$th entry of the tensor $\mathcal{F}$.
For a complex matrix $A$, $A^T$ denotes its transpose
and $A^*$ denotes its conjugate transpose.
The Kruskal rank of $A$, for which we denote $\kappa_A$,
is the largest number $k$ such that every set of $k$ columns of $A$
is linearly independent.
For a vector $v$, the $(v)_i$ denotes its $i$th entry and
$\mbox{diag}(v)$ denotes the square diagonal matrix whose
diagonal entries are given by the entries of $v$.
The subscript $v_{s:t}$ denotes the subvector of $v$
whose label is from $s$ to $t$.
For a matrix $A$, the subscript notation $A_{:,j}$ and $A_{i,:}$
respectively denote its $j$th column and $i$th row.
Similar subscript notation is used for tensors.
For two matrices $A,B$, their classical Kronecker product is denoted as $A \boxtimes B$.
For a set $S$, its cardinality is denoted as $|S|$.

For a tensor decomposition for $\mF$ such that
\be \label{mATD:usj}
\mF \, = \, \sum_{s=1}^r u^{s,1}\otimes  u^{s,2} \otimes...\otimes u^{s,m}.
\ee
we denote the matrices
\[
U^{(j)} \, = \, [u^{1,j},..., u^{r,j}],
\quad\, j =1, \ldots, m.
\]
The $U^{(j)}$ is called the $j$th decomposing matrix for $\mF$.
For convenience of notation, we denote that
\begin{align*}
U^{(1)} \circ \cdots \circ U^{(m)} =
\sum_{i=1}^r (U^{(1)})_{:,i} \otimes\ldots \otimes (U^{(m)})_{:,i}.
\end{align*}
Then the above tensor decomposition is equivalent to
$\mF = U^{(1)} \circ \cdots \circ U^{(m)}$.

For a matrix $V \in \mathbb{C}^{p \times n_t}$,
define the matrix-tensor product
\[
\mA \coloneqq   V \times_t \mF
\]
is a tensor in
$\mathbb{C}^{n_1\times ...\times n_{t-1} \times p \times n_{t+1} \times ...\times n_m}$
such that the $i$th slice of $\mA$ is
\[
\mA_{i_1,...,i_{t-1},:,i_{t+1},...,i_m} =
V \mF_{i_1,...,i_{t-1},:,i_{t+1},...,i_m}.
\]

\subsection{Flattening matrices}
\label{ssc:flat}

We partition the dimensions $n_1,n_2,...,n_m$ into two disjoint groups
$I_1$ and $I_2$ such that the difference
\[
\Big| \prod_{i \in I_1 } n_{i} - \prod_{j \in I_2} n_{j} \Big|
\]
is minimum. Up to a permutation, we write that
$I_1 = \{n_{1}, \ldots, n_{k}  \}$,
$I_2 = \{n_{k+1}, \ldots, n_{m} \}.$
For convenience, denote that
$$
\begin{array}{l}
I = \left\{\left(\imath_{1}, \ldots, \imath_{k} \right):
       1 \leq \imath_{j} \leq n_{j}, j=1, \ldots, k \right\} ,\\
J =\left\{\left(\imath_{k+1}, \ldots, \imath_{m}\right):
       1 \leq \imath_{j} \leq n_{j}, j=k+1, \ldots, m\right\}.
\end{array}
$$
For a tensor $\mF \in \bbC^{n_{1}\times \ldots\times n_{m}}$,
the above partition gives the flattening matrix
\begin{align}
    \label{ssc:flat:square}
\mbox{Flat} (\mF ) \coloneqq
\left(\mF_{\imath, \jmath}\right)_{\imath \in I, \jmath \in J}.
\end{align}
This gives the most square flattening matrix for $\mF$.
Let $\sigma_{r}$ denote the closure of all rank-$r$ tensors in
$\bbC^{n_{1}\times \ldots\times n_{m}}$,
under the Zariski topology (see \cite{cox2013ideals}).
The set $\sigma_r$ is an irreducible variety of $\bbC^{n_1 \times \cdots \times n_{m}}$.
For a given tensor $\mF \in \sigma_r$, it is possible that $\rank(\mF) > r.$
This fact motivates the notion of border rank:
\be
\rank_{B}(\mF)=\min \left\{r: \mF \in \sigma_r \right\},
\ee
For every tensor $\mF\in \mathbb{C}^{n_{1}\times \ldots \times n_{m}}$,
one can show that
\be     \label{rank_ineq}
\rank \mbox{Flat}(\mF) \leq \rank_{B}(\mF)
\leq \rank(\mF) .
\ee
A property $\mathtt{P}$ is said to hold $generically$ on
$\sigma_{r}$ if $\mathtt{P}$ holds on a Zariski open subset $T$ of $\sigma_r$.
For such a property $\mathtt{P}$, each $u \in T$ is called a generic point.
Interestingly, the above three ranks are equal for generic points of
$\sigma_r$ for a range of values of $r$.

\begin{lemma}
\label{cat_mat_lemma}
Let $s$ be the smaller dimension of the matrix $\mbox{Flat}(\mF)$.
For every $r \leq s$, the equalities
\be  \label{rank_eq}
 \rank \mbox{Flat}(\mF) \,= \, \rank_B(\mF) \,= \,  \rank (\mF)
\ee
hold for tensors $\mF$ in a Zariski open subset of $\sigma_r$.
\end{lemma}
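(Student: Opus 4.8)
The plan is to produce a single nonempty Zariski open subset $W \subseteq \sigma_r$ on which all three quantities equal $r$. Since \reff{rank_ineq} already supplies the chain $\rank \mbox{Flat}(\mF) \leq \rank_B(\mF) \leq \rank(\mF)$, it is enough to force $\rank \mbox{Flat}(\mF) \geq r$ and $\rank(\mF) \leq r$ on $W$; the inequalities then squeeze everything to $r$, and $W$ is automatically dense because $\sigma_r$ is irreducible.

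For the lower bound on the flattening rank, consider the parametrization $\phi(v^{1,1},\ldots,v^{r,m}) = \sum_{s=1}^r v^{s,1}\otimes\cdots\otimes v^{s,m}$, whose image is exactly the set of tensors of rank at most $r$. Under the partition into $I_1,I_2$, the $s$th rank-one summand flattens to the rank-one matrix $a_s b_s^\T$, where $a_s = v^{s,1}\otimes\cdots\otimes v^{s,k}\in\bbC^{|I|}$ and $b_s = v^{s,k+1}\otimes\cdots\otimes v^{s,m}\in\bbC^{|J|}$, so that $\mbox{Flat}(\mF) = A B^\T$ with $A=[a_1,\ldots,a_r]$ and $B=[b_1,\ldots,b_r]$. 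The decomposable vectors $a_s$ lie on the Segre variety inside $\bbC^{|I|}$, which spans the whole ambient space; hence $r$ generic such vectors are linearly independent as soon as $r\leq|I|$, and similarly the $b_s$ are independent when $r\leq|J|$. Since $r\leq s=\min(|I|,|J|)$, for generic arguments both $A$ and $B$ have full column rank $r$, giving $\rank\mbox{Flat}(\mF)=\rank(AB^\T)=r$. As $\rank\mbox{Flat}(\mF)\geq r$ is the nonvanishing of some $r\times r$ minor, it defines a Zariski open set $U_{\mathrm{flat}}$, which is nonempty by the construction just made.

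For the upper bound on the rank, note that the image of $\phi$ is constructible by Chevalley's theorem and dense in its closure $\sigma_r$; an irreducible variety cannot be the closure of a constructible set unless that set contains a nonempty Zariski open subset $U_1$. Every tensor in $U_1$ is a value of $\phi$ and therefore has rank at most $r$.

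Taking $W = U_{\mathrm{flat}}\cap U_1$, which is open and, by irreducibility of $\sigma_r$, dense, the chain \reff{rank_ineq} reads
\[
r \leq \rank\mbox{Flat}(\mF)\leq \rank_B(\mF)\leq \rank(\mF)\leq r
\]
on $W$, so all three are equal to $r$ and \reff{rank_eq} holds. I expect the genuine work to be the independence assertion in the second paragraph: one must check that generic points of the (irreducible, linearly nondegenerate) Segre variety remain linearly independent up to the ambient dimension, which I would establish by adjoining one generic point at a time, using that a proper linear subspace cannot contain a nondegenerate variety.
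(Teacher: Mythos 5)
Your proof is correct, and its skeleton matches the paper's: both arguments rest on the parametrization $\pi$ of $\sigma_r$ by $r$-tuples of rank-one factors, the fact that a dense constructible image contains a nonempty Zariski open subset of the irreducible variety $\sigma_r$, and the observation that $\rank \mbox{Flat}(\mF) \geq r$ is an open condition cut out by $r\times r$ minors. The organizational difference is where the minor condition is imposed: the paper removes the common zero locus $Z$ of the minors from the \emph{parameter} space, takes $Y$ to be its complement, and extracts an open subset of $\pi(Y)$, so that membership in that open set simultaneously gives $\rank(\mF)\le r$ and $\rank\mbox{Flat}(\mF)\ge r$; you instead take the image of the whole parameter space to get $U_1$ (handling $\rank(\mF)\le r$), define $U_{\mathrm{flat}}$ directly inside $\sigma_r$ by the flattening minors, and intersect. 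The substantive thing you add is the explicit nonemptiness argument: writing $\mbox{Flat}(\mF)=AB^{\T}$ with columns on the two Segre varieties and invoking nondegeneracy to get full column rank of $A$ and $B$ for generic parameters. This is exactly the point where the hypothesis $r\le s$ is used, and it is the step the paper glosses over --- the paper asserts that $Y$ is a Zariski open subset ``of full dimension'' without verifying that some minor $\phi_i$ is not identically zero on the parameter space, which is equivalent to what you prove. So your write-up is, if anything, more complete on the one nontrivial verification; the remaining sketch you flag (generic points of a nondegenerate irreducible variety are linearly independent up to the ambient dimension, by adjoining one point at a time) is standard and closes correctly.
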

\begin{proof}
Let $\phi_{1}, \ldots, \phi_{\ell}$ be the $r \times r$ minors of the matrix
\begin{align}
 \mbox{Flat} (\sum_{i=1}^r x^{i, 1} \otimes \cdots \otimes x^{i, m} ).
\end{align}
They are homogeneous polynomials in $x^{i, j}(i=1, \ldots, r, j=1, \ldots, m) .$
Let $x$ denote the tuple $\left(x^{1,1}, x^{1,2}, \ldots, x^{r, m}\right) .$
Define the projective variety in $\mathbb{P}^{ r\left(n_{1}+\cdots+n_{m}\right)-1}$
\begin{align}
Z = \left\{x: \phi_{1}(x)=\cdots=\phi_{\ell}(x)=0\right\}.
\end{align}
Then $Y:=\mathbb{P}^{r\left(n_{1}+\cdots+n_{m}\right)-1} \backslash Z$
is a Zariski open subset of full dimension.
Consider the polynomial mapping $\pi: Y \rightarrow \sigma_r$,
\begin{align}
\left(x^{1,1}, x^{1,2}, \ldots, x^{r, m}\right) \mapsto
\sum_{i=1}^{r}\left(x^{i, 1}\right) \otimes \cdots \otimes\left(x^{i, m}\right) .
\end{align}
The image $\pi(Y)$ is dense in the irreducible variety $\sigma_{r}.$
So, $\pi(Y)$ contains a Zariski open subset $\mathscr{Y}$
of $\sigma_r$ (see \cite{schafarevich1988basic}).
For each $\mF \in \mathscr{Y}$, there exists $u \in Y$ such that $\mF = \pi(u)$.
Because $u \notin Z$, at least one of $\phi_{1}(u), \ldots, \phi_{\ell}(u)$ is nonzero,
and hence $\rank \mbox{Flat}(\mF) \geq r .$ By \eqref{rank_ineq}, we know
\eqref{rank_eq} holds for all $\mF \in \mathscr{Y}$
since $\rank (\mF) \leq r .$ Since $\mathscr{Y}$ is a Zariski open subset of $\sigma_r$,
the lemma holds.
\end{proof}

By Lemma \ref{cat_mat_lemma}, if $r \leq s$ and $\mF$ is a generic tensor
in $\sigma_{r}$, we can use $\rank \mbox{Flat}(\mF)$ to estimate  $\rank (\mF)$.
However, for a generic $\mathcal{F} \in \mathbb{C}^{n_{1} \times \cdots \times n_{m}}$
such that $\rank \mbox{Flat}(\mF)=r$, we cannot conclude $\mF \in \sigma_r$.

\subsection{Reshaping of tensor decompositions}
\label{ssc:reshape}

A tensor $\mF$ of order greater than $3$ can be reshaped to
another tensor $\widehat{\mF}$ of order $3$. A tensor decomposition of
$\widehat{\mF}$ can be converted to a decomposition for $\mF$
under certain conditions.
In the following, we assume a given tensor $\mF$ has the decomposition \eqref{mATD:usj}.
Suppose the set $\{1,\ldots,m\}$ is partitioned into $3$ disjoint subsets
 \[
\{1,\ldots,m\}  \, = \,
I_1 \cup I_2 \cup I_3.
\]
Let $p_i=|I_i| \mathrm{~for~} i=1,2,3$.
For the reshaped vectors
\be \label{KRprod:vwz}
\left\{ \baray{rcll}
 w^{s,1} & = & u^{s,i_1} \boxtimes \cdots \boxtimes u^{s,i_{p_1}}
&\mathrm{~for~} I_1 \, = \, \{ i_1, \ldots, i_{p_1} \},\\
w^{s,2} &=& u^{s,j_1} \boxtimes \cdots \boxtimes u^{s,j_{p_2}}
&\mathrm{~for~} I_2 \, = \, \{ j_1, \ldots, j_{p_2} \}, \\
w^{s,3} & = & u^{s,k_1} \boxtimes \cdots \boxtimes u^{s,k_{p_3}}
&\mathrm{~for~} I_3 \, = \, \{ k_1, \ldots, k_{p_3} \},
\earay \right.
\ee
we get the following tensor decomposition
\be \label{outer_product_form}
\widehat{\mF} \, = \, \sum_{s=1}^r
w^{s,1}\otimes w^{s,2} \otimes w^{s,3} .
\ee
Conversely, for a decomposition like \reff{outer_product_form} for $\widehat{\mF}$,
if all $w^{s,1}, w^{s,2}, w^{s,3}$ can be expressed
as rank-$1$ products as in \reff{KRprod:vwz},
then the equation \reff{outer_product_form} can be reshaped to
a tensor decomposition for $\mF$ as in \reff{mATD:usj}.
When the flattened tensor $\widehat{\mF}$ satisfies some conditions,
the tensor decomposition of $\widehat{\mF}$ is unique.
For such a case, we can obtain a tensor decomposition for $\mF$
through the decomposition \reff{outer_product_form}.
A classical result about the uniqueness is
the Kruskal's criterion \cite{Kruskal1977ThreeWayArrays}.

\begin{theorem} \label{thm:Kruskal}
(Kruskal's Criterion, \cite{Kruskal1977ThreeWayArrays})
Let $\mathcal{F} = U^{(1)} \circ U^{(2)}  \circ U^{(3)}$ be a tensor
with each $U^{(i)} \in \bbC^{n_i \times r}$.
Let $\kappa_i$ be the Kruskal rank of $U^{(i)}$, for $i=1,2,3$.
If \[ 2r +2 \leq \kappa_1+\kappa_2+\kappa_3,  \]
then $\mathcal{F}$ has a unique rank-$r$ tensor decomposition.
\end{theorem}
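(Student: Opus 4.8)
The plan is to establish \emph{essential uniqueness}: any two rank-$r$ decompositions of $\mathcal{F}$ agree after a common permutation of the $r$ summands and a rescaling of the factor vectors. Write two such decompositions as $\mathcal{F} = A\circ B\circ C = \bar A\circ \bar B\circ \bar C$, with columns $a_s,b_s,c_s$ and $\bar a_s,\bar b_s,\bar c_s$; none of these columns can vanish, or the rank would drop below $r$. First I would dispose of the trivial case $r=1$ and otherwise record that the hypothesis forces each $\kappa_i \ge 2$: since $\kappa_j \le r$ for every $j$, the bound $2r+2 \le \kappa_1+\kappa_2+\kappa_3$ gives $\kappa_i \ge 2r+2-2r = 2$.

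The central device is contraction against a vector. For $x \in \bbC^{n_3}$ set
\[
M(x) \, := \, \sum_{k} x_k\, \mathcal{F}_{:,:,k} \, = \, A\,\mathrm{diag}(C^T x)\,B^T \, = \, \bar A\,\mathrm{diag}(\bar C^T x)\,\bar B^T .
\]
Let $\omega(\cdot)$ denote the number of nonzero entries of a vector. From the first expression $M(x)$ is a sum of $\omega(C^T x)$ rank-one matrices, so $\rank M(x) \le \omega(C^T x)$, and likewise $\rank M(x) \le \omega(\bar C^T x)$. For a lower bound, restrict to the support $S$ of $C^T x$: then $M(x) = A_S\,\mathrm{diag}((C^Tx)_S)\,B_S^T$ with the diagonal factor invertible, and Sylvester's rank inequality together with $\rank A_S \ge \min(|S|,\kappa_1)$ and $\rank B_S \ge \min(|S|,\kappa_2)$ yields
\[
\rank M(x) \, \ge \, \min(\omega(C^Tx),\kappa_1) + \min(\omega(C^Tx),\kappa_2) - \omega(C^T x).
\]

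The core step is to verify the hypotheses of \emph{Kruskal's permutation lemma}: if $\kappa_C \ge 2$, $\bar C$ has no zero column, and for every $x$ the implication $\omega(\bar C^T x)\le r-\kappa_C+1 \Rightarrow \omega(C^Tx)\le \omega(\bar C^Tx)$ holds, then $\bar C = C\Pi\Delta$ for a permutation $\Pi$ and a nonsingular diagonal $\Delta$. To check the implication, suppose $t:=\omega(\bar C^Tx)\le r-\kappa_3+1$ but $u:=\omega(C^Tx)\ge t+1$. The bounds above give $t \ge \rank M(x) \ge \min(u,\kappa_1)+\min(u,\kappa_2)-u$, while $t \le r-\kappa_3+1 \le \kappa_1+\kappa_2-r-1$ by the standing inequality. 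A short case analysis on the size of $u$ relative to $\kappa_1,\kappa_2$ then forces one of the impossibilities $u\ge t+1$ with $t\ge u$, or $\max(\kappa_1,\kappa_2) > r$, or $u > r$. Hence $u \le t$, the lemma applies, and $\bar C = C\Pi\Delta$; repeating the argument with contraction in the first and second modes (the hypothesis is symmetric in the three modes) gives $\bar A = A\,\Pi_A\Delta_A$ and $\bar B = B\,\Pi_B\Delta_B$ as well.

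It remains to show the three permutations coincide and the scalings are consistent, so that the rank-one terms match one-to-one. I would feed the matched factor matrices back into $\mathcal{F} = A\circ B\circ C = \bar A \circ \bar B \circ \bar C$; comparing slices and using that any $\kappa_i\ge 2$ columns are linearly independent forces $\Pi_A=\Pi_B=\Pi_C=\Pi$ and $\Delta_A\Delta_B\Delta_C = I$, which is exactly essential uniqueness. The main obstacle is the permutation lemma itself: its proof is the genuinely combinatorial heart of Kruskal's argument, and I would either invoke it as the cited result of \cite{Kruskal1977ThreeWayArrays} or reconstruct it by induction on $\kappa_C$ analyzing the common zero patterns of $C^Tx$ and $\bar C^Tx$. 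Everything else reduces to the rank bookkeeping sketched above.
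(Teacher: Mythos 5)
The paper does not prove this statement at all: Theorem~\ref{thm:Kruskal} is quoted verbatim as Kruskal's criterion with a citation to \cite{Kruskal1977ThreeWayArrays}, so there is no in-paper argument to compare yours against. Judged on its own terms, your sketch is a faithful reconstruction of the classical proof (Kruskal's original argument, in the streamlined form later given by Stegeman and Sidiropoulos): the mode-$3$ contraction $M(x)=A\,\mathrm{diag}(C^Tx)\,B^T$, the upper bound $\rank M(x)\le\omega(\bar C^Tx)$ from the competing decomposition, the lower bound $\rank M(x)\ge\min(\omega,\kappa_1)+\min(\omega,\kappa_2)-\omega$ via Sylvester's inequality and the definition of Kruskal rank, and the three-case analysis (which I checked: the cases $u\le\min(\kappa_1,\kappa_2)$, $\min\le u<\max$, and $u\ge\max(\kappa_1,\kappa_2)$ respectively force $t\ge u$, $\max(\kappa_i)>r$, and $u>r$, all impossible) are all correct, as is the derivation $r-\kappa_3+1\le\kappa_1+\kappa_2-r-1$ from the standing hypothesis. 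Two caveats keep this from being a complete self-contained proof. First, as you say yourself, the permutation lemma is the combinatorial heart of the matter, and invoking it as a black box means the hardest part of Kruskal's theorem is still carried by the citation; since the paper cites the whole theorem anyway, this is acceptable, but it should be stated as such rather than as a proof. Second, your setup compares two decompositions with exactly $r$ terms each, which presumes $\rank(\mathcal{F})=r$; the full statement requires ruling out decompositions with fewer than $r$ terms (this is where the no-zero-column claim actually gets its force), and the standard treatment runs the permutation-lemma argument for an alternative decomposition with $r'\le r$ terms and concludes $r'=r$ along the way. The final step of identifying $\Pi_A=\Pi_B=\Pi_C$ and $\Delta_A\Delta_B\Delta_C=I$ is only sketched, but it is routine once the three factor matrices are matched columnwise.
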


The Kruskal's Criterion
can be generalized for more range of $r$ as in \cite{Chiantini2017}.
Assume the dimension $n_1 \geq n_2 \geq n_3 \geq 2$
and the rank $r$ is such that
\[
2r + 2 \leq  \mathrm{min}(n_1,r)+\mathrm{min}(n_2,r)+\mathrm{min}(n_3,r),
\]
or equivalently, for $\delta =n_2+n_3-n_1-2$, $r$ is such that
\[
r \leq n_1 +\mathrm{min}\{ \frac{1}{2}\delta,\delta \}.
\]
If $\mF$ is a generic tensor of rank $r$ as above
in the space $\bbC^{n_1 \times n_2 \times n_3}$,
then $\mF$ has a unique rank-$r$ decomposition.
There following is a uniqueness result for reshaped tensor decompositions.

\begin{theorem}  \label{fla}
(Reshaped Kruskal Criterion, \cite[Theorem~4.6]{Chiantini2017})
For the tensor space
$\bbC^{n_1 \times n_2 \times \cdots \times n_m}$ with $m \ge 3$,
let $I_1 \cup I_2 \cup I_3 =\{1,2,...,m\}$ be a union of disjoint sets
and let
\[
p_1 = \prod_{i \in I_1} n_i, \quad
p_2 = \prod_{j \in I_2} n_j, \quad
p_3 = \prod_{k \in I_3} n_k .
\]
Suppose $p_1 \geq p_2 \geq p_3$ and let $\delta=p_2+p_3-p_1-2$.
Assume
\be     \label{generlized_kr}
r \leq p_1+\mathrm{min}\{\frac{1}{2}\delta,\delta \} .
\ee
If $\mF$ is a generic tensor of rank $r$
in $\bbC^{n_1 \times n_2 \times \cdots \times n_m}$,
then the reshaped tensor $\widehat{\mF}\in \mathbb{C}^{p_1 \times p_2 \times p_3}$
as in \reff{outer_product_form}
has a unique rank-$r$ decomposition.
\end{theorem}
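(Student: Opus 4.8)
The plan is to reduce the claim to verifying the hypotheses of Kruskal's Criterion (Theorem~\ref{thm:Kruskal}) for the reshaped order-three tensor $\widehat{\mathcal{F}} \in \mathbb{C}^{p_1\times p_2\times p_3}$. By \eqref{KRprod:vwz}--\eqref{outer_product_form}, a rank-$r$ decomposition $\mathcal{F} = \sum_s u^{s,1}\otimes\cdots\otimes u^{s,m}$ induces the rank-$r$ decomposition $\widehat{\mathcal{F}} = W^{(1)}\circ W^{(2)}\circ W^{(3)}$, where the $s$th column of $W^{(i)}$ is the Kronecker product $\boxtimes_{j\in I_i} u^{s,j}\in\mathbb{C}^{p_i}$. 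To apply Theorem~\ref{thm:Kruskal} I would show that, for generic decomposing vectors, each $W^{(i)}$ attains the maximal Kruskal rank $\kappa_i=\min(p_i,r)$. Granting this, the arithmetic hypothesis \eqref{generlized_kr} becomes, by the equivalence recorded just above the theorem (with $(n_1,n_2,n_3)$ replaced by $(p_1,p_2,p_3)$), exactly the inequality $2r+2\le\min(p_1,r)+\min(p_2,r)+\min(p_3,r)=\kappa_1+\kappa_2+\kappa_3$, and Theorem~\ref{thm:Kruskal} then yields uniqueness of the rank-$r$ decomposition of $\widehat{\mathcal{F}}$.

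The technical heart is a general-position lemma: for an irreducible nondegenerate variety $X\subseteq\mathbb{P}^{N-1}$ and any $k\le N$, a generic choice of $k$ points of $X$ is linearly independent. I would prove this by induction on $k$. If generic points $x_1,\dots,x_{k-1}$ span a proper subspace $L$, then $L$ lies in a hyperplane; since $X$ is nondegenerate we have $X\not\subseteq L$, and irreducibility forces a generic $x_k\notin L$, so $x_1,\dots,x_k$ are independent. I would apply this to the Segre variety $X=\mathbb{P}^{n_{j_1}-1}\times\cdots\times\mathbb{P}^{n_{j_{p_i}}-1}\hookrightarrow\mathbb{P}^{p_i-1}$ associated to $I_i$, whose points are exactly the projectivized Kronecker products $\boxtimes_{j\in I_i}u^{s,j}$. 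This Segre variety is irreducible and nondegenerate, because the standard rank-one tensors $e_{\imath_1}\otimes\cdots\otimes e_{\imath_{p_i}}$ already span $\mathbb{C}^{p_i}$. Hence a generic $\min(p_i,r)$-subset of the columns of $W^{(i)}$ is linearly independent.

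To upgrade this to maximal Kruskal rank and pass to a generic tensor $\mathcal{F}\in\sigma_r$, I would argue on the parameter space $P=(\mathbb{C}^{n_1}\times\cdots\times\mathbb{C}^{n_m})^r$ with the parametrization $\Phi$ sending $(u^{s,j})$ to $\sum_s u^{s,1}\otimes\cdots\otimes u^{s,m}$, whose image is dense in the irreducible variety $\sigma_r$. The condition that all three $W^{(i)}$ have Kruskal rank $\min(p_i,r)$ is a finite intersection, over the $\min(p_i,r)$-subsets of columns, of the open conditions ``this subset is independent'' (nonvanishing of a suitable minor); it therefore defines a Zariski-open set $\mathcal{U}\subseteq P$, which the lemma of the previous paragraph shows to be nonempty, hence dense. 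Because $\mathcal{U}$ is dense in the irreducible $P$ and $\Phi$ is dominant, the restriction $\Phi|_{\mathcal{U}}$ is still dominant, so by Chevalley's theorem $\Phi(\mathcal{U})$ contains a dense open subset $\mathcal{O}\subseteq\sigma_r$. Every $\mathcal{F}\in\mathcal{O}$ then admits a decomposition whose reshaping has decomposing matrices $W^{(i)}$ of maximal Kruskal rank, so Theorem~\ref{thm:Kruskal} applies to $\widehat{\mathcal{F}}$ and gives the asserted uniqueness for generic $\mathcal{F}$.

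The step I expect to be the main obstacle is the maximal-Kruskal-rank statement for the Khatri--Rao (columnwise Kronecker) products, i.e.\ the general-position property of points on the Segre variety; the reshaping bookkeeping, the arithmetic equivalence, and the Chevalley-type transfer of genericity are comparatively routine. A convenient simplification is that ``Kruskal rank $\ge k$'' is a Zariski-open condition, so it suffices to produce a single witness configuration (for instance Vandermonde-type vectors) or, as above, to invoke nondegeneracy abstractly, rather than to evaluate the relevant minors explicitly.
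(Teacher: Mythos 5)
The paper offers no internal proof of Theorem~\ref{fla} to compare against: the statement is imported directly from \cite[Theorem~4.6]{Chiantini2017}, with the citation standing in for an argument. Measured against that cited source, your proposal reconstructs essentially the same proof: reshape the decomposition, show that the columnwise-Kronecker factor matrices $W^{(i)}$ of a generic decomposition have maximal Kruskal rank $\kappa_i=\min(p_i,r)$, transfer the genericity from the parameter space to $\sigma_r$, and invoke Kruskal's criterion (Theorem~\ref{thm:Kruskal}). Your supporting steps are sound: the general-position lemma for irreducible nondegenerate subvarieties of $\mathbb{P}^{N-1}$ (with its induction on $k\le N$), its application to the Segre varieties associated to $I_1,I_2,I_3$ (nondegenerate because the coordinate rank-one tensors already span $\bbC^{p_i}$), the observation that maximal Kruskal rank is a finite conjunction of open minor conditions on the parameters, and the Chevalley-type argument that the good parameter set maps onto a subset of $\sigma_r$ containing a dense Zariski-open subset.

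The one inaccuracy is the claim that \reff{generlized_kr} is ``exactly'' the inequality $2r+2\le\min(p_1,r)+\min(p_2,r)+\min(p_3,r)$. The two are equivalent only for $r\ge 2$ (and $p_3\ge 2$): at $r=1$ the bound \reff{generlized_kr} holds whenever all $p_i\ge 2$, while Kruskal's hypothesis fails, since each $\kappa_i\le r=1$ gives $\kappa_1+\kappa_2+\kappa_3\le 3<4=2r+2$. So your proof as written establishes nothing for $r=1$; that case must be split off and settled by the trivial uniqueness (up to scaling) of rank-one decompositions of order-three tensors. Relatedly, your argument tacitly needs $p_3\ge 2$: if $p_3=1$ the reshaped tensor is in effect a matrix, uniqueness fails for every $r\ge 2$, and yet \reff{generlized_kr} can still hold; this mirrors the assumption $n_3\ge 2$ made in the paper's order-three discussion preceding the theorem and should be stated. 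Neither point affects the substance of your argument, which is correct once these edge cases are patched.
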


\section{Generating Polynomials}
\label{sc:gp}

Generating polynomials can be used to compute tensor rank decompositions.
We consider tensors whose ranks $r$ are not bigger than the highest dimension,
say, $r\leq n_1$ where $n_1$ is the biggest of $n_1, \ldots, n_m$.
Denote the indeterminate vector variables
\[
\bold{x_1} = (x_{1,1},...x_{1,n_1}),
\, \ldots, \,
\bold{x_m} = (x_{m,1},...x_{m,n_m}).
\]
A tensor in $\bbC^{n_1 \times \cdots \times n_m}$
can be labelled by monomials like $x_{1,i_1}x_{2,i_2}...x_{m,i_m}$. Let
\begin{equation}
\begin{array}{rcl}
\mathbb{M} & \coloneqq &
\big\{ x_{1,i_1}...x_{m,i_m} \; | \;1\leq i_j \leq n_j \big\} .
\end{array}
\end{equation}
For a subset $J \subseteq \{1,2,...,m \}$,   denote that
\be 	\label{m_j_def}
\boxed{
\begin{array}{rcl}
	J^c  & \coloneqq & \{1,2,...,m \} \backslash J,  \\
	\mathbb{M}_J  & \coloneqq & \big\{ x_{1,i_1}...x_{m,i_m} \; | \;
      x_{j,i_j} = 1\, ~\forall \,   j\in J^c \big\}, \\
   \mathcal{M}_J & \coloneqq & {\rm span} \{\mathbb{M}_J\}.
\end{array}
}
\ee
A label tuple $(i_1,\ldots,i_m)$ is uniquely determined by the monomial
$x_{1,i_1} \cdots x_{m,i_m}$.
So a tensor $\mF \in \mathbb{C}^{n_1\times \ldots \times n_m}$
can be equivalently labelled by monomials such that
\begin{align} \label{label:monomial}
\mF_{x_{1,i_1}\ldots x_{m,i_m}}   \coloneqq   \mF_{i_1,\ldots,i_m}.
\end{align}
With the new labelling by monomials, define the bi-linear product
\begin{align}  	\label{polyten}
\langle \sum_{\mu \in \mathbb{M}}c_{\mu}\mu,\mF \rangle
 \, \coloneqq  \, \sum_{\mu \in \mathbb{M}}c_{\mu}\mF_{\mu}.
\end{align}
In the above, each $c_{\mu}$ is a scalar
and $\mathcal{F}$ is labelled by monomials as in (\ref{label:monomial}).

\begin{definition}  \label{def:gp}
(\cite{NieLR14,NWZ22})
For a subset $J \subseteq \{1,2,...,m\}$ and a tensor
$\mathcal{F} \in \mathbb{C}^{n_1 \times \cdots \times n_m}$,
a polynomial $p \in \mathcal{M}_J$
is called a generating polynomial for $\mathcal{F}$ if
\begin{equation}  		\label{inner_p}
\langle pq,\mathcal{F} \rangle =0 \quad
\mbox{for all} \, \, q \in \mathbb{M}_{J^c} .
\end{equation}
\end{definition}

The following is an example of generating polynomials.

\begin{exmp}
Consider the cubic order tensor
$\mathcal{F} \in \mathbb{C}^{3 \times 3 \times 3}$ given as
\begin{gather*}
\begin{bmatrix}
\mathcal{F}_{:,:,1} & \rvline & \mathcal{F}_{:,:,2} & \rvline & \mathcal{F}_{:,:,3} \\
\end{bmatrix}=
\begin{bmatrix}
\begin{matrix*}[r]
11 & 20 & 10\\
7 & 10 & 5 \\
12 & 18 & 9 \\
\end{matrix*} & \rvline &
\begin{matrix*}[r]
15 & 24 & 12 \\
15 & 18 & 9 \\
24 & 30 & 15 \\
\end{matrix*} &\rvline &
\begin{matrix*}[r]
7 & 10 & 5 \\
9 & 10 & 5 \\
14 & 16 & 8 \\
 \end{matrix*} \\
\end{bmatrix}.
\end{gather*}
The following is a generating polynomial for $\mathcal{F}$:
\[
p \coloneqq (2x_{1,1}-x_{1,2})(2x_{2,1}-x_{2,2}).
\]Note that $p \in \mathcal{M}_{\{1,2 \}}$
and for each $i_3 = 1, 2, 3$
\begin{align*}
p \cdot x_{3,i_3}  =  (2x_{1,1}-x_{1,2})(2x_{2,1}-x_{2,2})  x_{3,i_3}.
\end{align*}
One can check that for each  $i_3 = 1, 2, 3$,
\[
4\mathcal{F}_{1,1,i_3}-2\mathcal{F}_{1,2,i_3}-
2\mathcal{F}_{2,1,i_3}+\mathcal{F}_{2,2,i_3}=0.
\]
This is because
$\begin{bmatrix}4 & -2 & -2 & 1 \end{bmatrix}$ is orthogonal to
\[
\begin{bmatrix}
\mathcal{F}_{1,1,i_3} & \mathcal{F}_{1,2,i_3} &
\mathcal{F}_{2,1,i_3} & \mathcal{F}_{2,2,i_3}
\end{bmatrix}
\]
for each $i_3=1,2,3$.
\end{exmp}

Suppose the rank $r \le n_1$ is given.
For convenience of notation, denote the label set
\begin{align}  \label{set:J}
J  \coloneqq  \{(i,j,k):1\leq i \leq r, ~2 \leq j \leq m, ~2\leq k \leq n_j \}.
\end{align}
For a matrix $G \in \mathbb{C}^{[r]\times J}$
and a triple $\tau =(i,j,k) \in J$, define the bi-linear polynomial
\begin{align}  	\label{phi_def}
\phi[G,\tau](x) \coloneqq  \sum_{\ell=1}^{r}
G(\ell,\tau)x_{1,\ell}x_{j,1}-x_{1,i}x_{j,k}
\,\, \in \, \mathcal{M}_{\{1,j\}}.
\end{align}
The rows of $G$ are labelled by $\ell=1,2,...,r$
and the columns of $G$ are labelled by $\tau \in J$.
We are interested in $G$ such that
$\phi[G,\tau]$ is a generating polynomial for a tensor
$\mathcal{F} \in \mathbb{C}^{n_1 \times n_2 \times \ldots \times n_m}$.
This requires that
\begin{align*}
\langle \phi[G,\tau] \cdot \mu ,\mathcal{F} \rangle=0 \,
\quad \mbox{for all} \, \, \mu \in \mathbb{M}_{\{1,j\}^c}.
\end{align*}
The above is equivalent to the equation
($\mathcal{F}$ is labelled as in (\ref{label:monomial}))
\begin{equation}  	\label{linear_eq_g}
\sum_{\ell=1}^{r} G(\ell,\tau)\mathcal{F}_{x_{1,\ell} \cdot \mu}
=\mathcal{F}_{x_{1,i}  x_{j,k}  \cdot \mu} .
\end{equation}

\begin{definition} (\cite{NieLR14,NWZ22})
When \eqref{linear_eq_g} holds for all $\tau \in J$,
the $G$ is called a generating matrix for $\mathcal{F}$.
\end{definition}

For given $G$, $j \in \{ 2, \ldots, m \}$ and $k \in \{2,\ldots,n_j \}$,
we denote the matrix
\begin{equation}
M^{j,k}[G] \coloneqq \begin{bmatrix}
	G(1,(1,j,k)) & G(2,(1,j,k)) & \dots & G(r,(1,j,k)) \\
	G(1,(2,j,k)) & G(2,(2,j,k)) & \dots & G(r,(2,j,k)) \\
	\vdots & \vdots & \ddots & \vdots \\
	G(1,(r,j,k)) & G(2,(r,j,k)) & \dots & G(r,(r,j,k)) \\
\end{bmatrix} .
\label{mjk}
\end{equation}
For each $(j,k)$, define the matrix/vector
\be  \label{def:A_b}
\left\{ \baray{rcl}
 A[\mF,j]  &  \coloneqq &
\Big( \mF_{x_{1,\ell}\cdot \mu}
     \Big)_{\mu \in \mathbb{M}_{\{1,j\}^c}, 1\leq \ell \leq r} , \\
 b[\mF,j,k] & \coloneqq &
     \Big(\mF_{x_{1,\ell} \cdot x_{j,k}\cdot \mu}
      \Big)_{\mu \in \mathbb{M}_{\{1,j\}^c}, 1\leq \ell \leq r} .	
\earay  \right.
\ee
The equation \eqref{linear_eq_g} is then equivalent to
\be \label{linear_eq}
A[\mF,j] (M^{j,k}[G])^T \, = \, b[\mF,j,k].
\ee
The following is a useful property
for the matrices $M^{j,k}[G]$.

\begin{theorem} (\cite{NieLR14,NWZ22})  \label{decom_to_Mjk}
Suppose $\mF =\sum_{s=1}^r u^{s,1} \otimes ... \otimes u^{s,m}$
for vectors $u^{s,j} \in \bbC^{n_j}$.
If $r \leq n_1$, $(u^{s,2})_1...(u^{s,m})_1 \neq 0$,
and the first $r$ rows of the first decomposing matrix
\[
U^{(1)} \coloneqq [u^{1,1} \, \, \cdots \,\,  u^{r,1} ]
\]
are linearly independent,
then there exists a $G$ satisfying \eqref{linear_eq}
and satisfying (for all $j \in \{ 2, \ldots, m \}$,
$k \in \{2,\ldots,n_j \}$ and $s=1,\ldots,r$)
\begin{align}  	\label{Mjk[G]:eigeqn}
M^{j,k}[G]\cdot (u^{s,1})_{1:r}=
(u^{s,j})_k\cdot (u^{s,1})_{1:r}.
\end{align}
\end{theorem}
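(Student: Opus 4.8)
The plan is to reduce the whole statement to linear algebra driven by the $r\times r$ matrix $W$ consisting of the first $r$ rows of $U^{(1)}$, whose columns are exactly the vectors $(u^{s,1})_{1:r}$. By hypothesis these rows are linearly independent, so $W$ is invertible. The eigen-relation \eqref{Mjk[G]:eigeqn} asks that each $M^{j,k}[G]$ be diagonalized by the columns of $W$, so my idea is to \emph{define} the matrices $M^{j,k}[G]$ by prescribing this eigendecomposition, to read off $G$ from them through \eqref{mjk}, and then to check that the resulting $G$ solves \eqref{linear_eq}.

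First I would make the normalization $(u^{s,j})_1 = 1$ for all $s$ and all $j\geq 2$, which is harmless: since $(u^{s,2})_1\cdots(u^{s,m})_1\neq 0$, each $(u^{s,j})_1\neq 0$, so I may rescale $u^{s,j}\mapsto u^{s,j}/(u^{s,j})_1$ and absorb the scalars into $u^{s,1}$. This leaves $\mathcal{F}$ unchanged, scales each $(u^{s,1})_{1:r}$ by a nonzero constant (hence preserves both the invertibility of $W$ and the eigenvector claim), and makes the eigenvalues come out as $(u^{s,j})_k$. Next I would record the explicit form of the data in \eqref{def:A_b}: substituting \eqref{mATD:usj} and using that $\mu\in\mathbb{M}_{\{1,j\}^c}$ fixes the mode-$j$ index to $1$, each entry factors over $s$ into $(u^{s,1})_\ell$, a mode-$j$ factor, and $c_s(\mu)\coloneqq\prod_{t\neq 1,j}(u^{s,t})_{i_t}$. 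Collecting these into the matrix $C$ with $C_{\mu,s}=c_s(\mu)$ and the diagonal matrices $D^{j,k}\coloneqq\mathrm{diag}\big((u^{s,j})_k\big)_s$, the normalization $(u^{s,j})_1=1$ yields the factorizations $A[\mathcal{F},j]=C\,W^{\T}$ and $b[\mathcal{F},j,k]=C\,D^{j,k}W^{\T}$.

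I then define $M^{j,k}[G]\coloneqq W D^{j,k} W^{-1}$ and recover $G$ from \eqref{mjk}. By construction $M^{j,k}[G]\,W = W D^{j,k}$, whose $s$-th column reads $M^{j,k}[G](u^{s,1})_{1:r}=(u^{s,j})_k(u^{s,1})_{1:r}$, which is precisely \eqref{Mjk[G]:eigeqn}. Transposing the same identity gives $W^{\T}(M^{j,k}[G])^{\T}=D^{j,k}W^{\T}$, so that
\[
A[\mathcal{F},j]\,(M^{j,k}[G])^{\T}
= C\,W^{\T}(M^{j,k}[G])^{\T}
= C\,D^{j,k}W^{\T}
= b[\mathcal{F},j,k],
\]
which is \eqref{linear_eq}. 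This verifies both required properties at once.

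The point to get right — more a matter of careful bookkeeping than a real obstacle — is the role of the first-entry factors $(u^{s,j})_1$: without the normalization the eigenvalues would be $(u^{s,j})_k/(u^{s,j})_1$, and it is exactly the nonvanishing hypothesis that lets me absorb them to land on the stated eigenvalue $(u^{s,j})_k$. I would also stress that no rank condition on $C$ is needed: the check of \eqref{linear_eq} is a direct identity obtained by right-multiplying the factorization of $A[\mathcal{F},j]$, not a cancellation of $C$. Finally, since the columns of $G$ indexed by triples $(i,j,k)$ partition into disjoint blocks — one block $\{(i,j,k):1\le i\le r\}$ per pair $(j,k)$ — defining each block from the corresponding $M^{j,k}[G]$ assembles a single well-defined $G$, with no compatibility constraints linking different pairs $(j,k)$.
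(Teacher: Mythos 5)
Your proof is correct, but there is nothing in the paper to compare it against line by line: the paper states Theorem~\ref{decom_to_Mjk} without proof, citing \cite{NieLR14,NWZ22}. Your self-contained argument --- factor $A[\mathcal{F},j]=C\,W^{T}$ and $b[\mathcal{F},j,k]=C\,D^{j,k}W^{T}$ using that $\mu\in\mathbb{M}_{\{1,j\}^c}$ pins the mode-$j$ index to $1$, define $M^{j,k}[G]:=W D^{j,k}W^{-1}$, read $G$ off through \eqref{mjk}, and verify \eqref{linear_eq} by direct multiplication --- is the standard route and is in the same spirit as the cited sources; your two side remarks (that no rank condition on $C$ is needed since only existence, not uniqueness, of $G$ is claimed, and that the columns of $G$ split into disjoint blocks over the pairs $(j,k)$, so the matrices $M^{j,k}[G]$ can be prescribed independently) are exactly the right points to make. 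The one delicate issue is the scaling, and you flagged it honestly: as literally written, for an arbitrary decomposition with only $(u^{s,j})_1\neq 0$, the eigenvalue in \eqref{Mjk[G]:eigeqn} should be $(u^{s,j})_k/(u^{s,j})_1$; indeed, when $A[\mathcal{F},j]$ has full column rank the solution of \eqref{linear_eq} is unique and equals $W\,\mathrm{diag}\big((u^{s,j})_k/(u^{s,j})_1\big)_s\,W^{-1}$, so the stated eigenvalue $(u^{s,j})_k$ is attainable only under the normalization $(u^{s,j})_1=1$. Your WLOG rescaling therefore does not recover the literal unnormalized statement --- no proof could --- but it proves the only version that is true, and this is also the version the paper actually uses downstream: in Theorem~\ref{onetoone} the vectors $u^{s,j}$, $j\geq 2$, have first entry $1$ by construction \eqref{usj:j2:m}, and in Theorem~\ref{thm:aprox} the decomposition of $\mathcal{X}^{bs}$ can be rescaled in the same harmless way. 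So your proof stands, and your closing observation about the first-entry factors pinpoints a genuine (if minor) imprecision in the statement itself rather than a gap in your argument.
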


\section{Low rank tensor decompositions}
\label{sec:lowTD}

%
Without loss of generality,   assume
the dimensions are decreasing as
$n_1 \ge n_2 \ge \cdots \ge n_m.$
We discuss how to compute tensor decomposition
for a tensor $\mF \in \bbC^{n_1 \times \cdots \times n_m}$
when the rank $r$ is not bigger than the highest dimension,
i.e., $r \le n_1$.
As in Theorem~\ref{decom_to_Mjk}, the decomposing vectors $(u^{s,1})_{1:r}$
are common eigenvectors of the matrices $M^{j,k}[G]$,
with $(u^{s,j})_k$ being the eigenvalues respectively.
This implies that the matrices $M^{j,k}[G]$
are simultaneously diagonalizable.
This property can be used to compute tensor decompositions.

Suppose $G$ is a matrix such that \eqref{linear_eq} holds and
$M^{j,k}[G]$ are simultaneously diagonalizable. That is,
there is an invertible matrix $P \in \bbC^{r \times r}$
such that all the products $P^{-1}  M^{j,k}[G]  P$
are diagonal for all $j=2,\ldots, m$ and for all $k = 2, \ldots, n_j$.
Suppose $M^{j,k}[G]$ are diagonalized such that
\be \label{diag:eig:P}
P^{-1}  M^{j,k}[G]  P \, = \,
\mbox{diag}[\lambda_{j,k,1},\lambda_{j,k,2},\ldots,\lambda_{j,k,r}]
\ee
with the eigenvalues $\lambda_{j,k,s}$.
For each $s =1,\ldots,r$ and $j=2,\ldots, m$,
denote the vectors
\be \label{usj:j2:m}
u^{s,j}  \, \coloneqq \,  (1, \lambda_{j,2, s}, \ldots, \lambda_{j,n_j,s} ).
\ee
When $\mF$ is rank-$r$, there exist vectors
$u^{1,1}, \ldots, u^{r,1} \in \bbC^{n_1}$ such that
\be  \label{exist:u(s1)}
\mF  =\sum_{s=1}^r u^{s,1} \otimes u^{s,2} \otimes \cdots  \otimes u^{s,m} .
\ee
The vectors $u^{s,1}$ can be found by solving linear equations
after $u^{s,j}$ are obtained for $j=2,\ldots,m$ and $s=1,\ldots,r$.
The existence of vectors $u^{s,1}$ satisfying the tensor decomposition
(\ref{exist:u(s1)}) is shown in the following theorem.

\begin{theorem}	\label{onetoone}
Let $\mathcal{F} =V^{(1)} \circ ... \circ V^{(m)}$ be a rank-r tensor,
for matrices $V^{(i)} \in \bbC^{n_i \times r}$,
such that the first $r$ rows of $V^{(1)}$ are linearly independent.
Suppose $G$ is a matrix satisfying \eqref{linear_eq} and
$P \in \bbC^{r \times r}$ is an invertible matrix such that all matrix products
$P^{-1} \cdot M^{j,k}[G] \cdot P$
are simultaneously diagonalized as in \eqref{diag:eig:P}.
For $j=2,\ldots,m$ and $s=1,\ldots, r$,
let $u^{s,j}$ be vectors given as in  \eqref{usj:j2:m}.
Then, there must exist vectors $u^{1,1}, \ldots, u^{r,1} \in \bbC^{n_1}$
such that the tensor decomposition \eqref{exist:u(s1)} holds.
\end{theorem}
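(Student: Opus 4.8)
The plan is to avoid identifying the matrices $M^{j,k}[G]$ with the multiplication matrices coming from the true decomposition $V^{(1)}\circ\cdots\circ V^{(m)}$. That identification is tempting via Theorem~\ref{decom_to_Mjk}, but it is not available here: the system \eqref{linear_eq} can be underdetermined (for instance the sub-products $\bigotimes_{t\neq 1,j}v^{s,t}$ may be linearly dependent, in which case $A[\mF,j]$ is not of full column rank), so the given $G$ need not reproduce the same $M^{j,k}$ as the true decomposition. Instead I would work directly with the left eigenvectors. Let $\pi_s^{\T}$ denote the $s$th row of $P^{-1}$; then \eqref{diag:eig:P} gives $\pi_s^{\T}M^{j,k}[G]=(u^{s,j})_k\,\pi_s^{\T}$ for all $j\in\{2,\ldots,m\}$ and $k\in\{2,\ldots,n_j\}$, where $(u^{s,j})_k=\lambda_{j,k,s}$ and $(u^{s,j})_1=1$ as in \eqref{usj:j2:m}.

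The key step is to contract the first mode of $\mF$ against $\pi_s$: define the order-$(m-1)$ tensor $h^s\in\bbC^{n_2\times\cdots\times n_m}$ by $(h^s)_{i_2,\ldots,i_m}=\sum_{\ell=1}^r(\pi_s)_\ell\,\mF_{\ell,i_2,\ldots,i_m}$. Multiplying the defining equation $A[\mF,j](M^{j,k}[G])^{\T}=b[\mF,j,k]$ from \eqref{linear_eq} on the right by $\pi_s$ and using the left-eigenvector relation collapses it to $(u^{s,j})_k\,A[\mF,j]\pi_s=b[\mF,j,k]\pi_s$. Unwinding the definitions in \eqref{def:A_b}, the vector $A[\mF,j]\pi_s$ is precisely the $i_j=1$ slice of $h^s$, while $b[\mF,j,k]\pi_s$ is its $i_j=k$ slice. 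Hence for every $j$ and every $k$ the $i_j=k$ slice of $h^s$ equals $(u^{s,j})_k$ times its $i_j=1$ slice.

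This slice identity forces $h^s$ to be rank one. Peeling off the modes one at a time yields $(h^s)_{i_2,\ldots,i_m}=(h^s)_{1,\ldots,1}\prod_{j=2}^m(u^{s,j})_{i_j}$, that is $h^s=\gamma_s\,w^s$ with $w^s:=u^{s,2}\otimes\cdots\otimes u^{s,m}$ and $\gamma_s=(h^s)_{1,\ldots,1}$. In particular $h^s\in\mathrm{span}\{w^1,\ldots,w^r\}$ for each $s$.

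It remains to recover the first-mode vectors. Writing $\tilde w^{s'}:=v^{s',2}\otimes\cdots\otimes v^{s',m}$, the true decomposition gives $h^s=\sum_{s'}c_{s,s'}\tilde w^{s'}$ with $c_{s,s'}=\pi_s^{\T}(v^{s',1})_{1:r}$; in matrix form the coefficient matrix is $P^{-1}Q$, where $Q=[(v^{1,1})_{1:r},\ldots,(v^{r,1})_{1:r}]$. Since $P$ is invertible and $Q$ is invertible (the first $r$ rows of $V^{(1)}$ are linearly independent), this matrix is invertible, so $\mathrm{span}\{h^1,\ldots,h^r\}=\mathrm{span}\{\tilde w^1,\ldots,\tilde w^r\}$. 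Every row $\mF_{i_1,:,\ldots,:}=\sum_{s'}(v^{s',1})_{i_1}\tilde w^{s'}$ of the mode-1 flattening therefore lies in $\mathrm{span}\{\tilde w^{s'}\}=\mathrm{span}\{h^s\}\subseteq\mathrm{span}\{w^s\}$. Consequently each such row can be written as $\sum_s(U^{(1)})_{i_1,s}w^s$, and reading off the columns $u^{s,1}$ of $U^{(1)}$ gives vectors satisfying $\mF=\sum_{s=1}^r u^{s,1}\otimes\cdots\otimes u^{s,m}$, which is \eqref{exist:u(s1)}. The main obstacle, and the crux of the argument, is the very first move: contracting with the left eigenvector $\pi_s$ turns the possibly non-unique matrix equation \eqref{linear_eq} into an intrinsic rank-one statement about the single tensor $h^s$, after which everything reduces to routine linear algebra.
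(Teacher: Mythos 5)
Your proof is correct, and it reaches the conclusion by a genuinely different route than the paper's, dual to it in a precise sense. The paper anchors on the columns $p_s$ of $P$ (right eigenvectors): it expands $\mF_{1:r,1,\ldots,1}=c_1p_1+\cdots+c_rp_r$, forms the auxiliary tensor $\mH=\sum_{s} c_sp_s\otimes u^{s,2}\otimes\cdots\otimes u^{s,m}$, argues that $G$ is also a generating matrix for $\mH$, and propagates the anchor identity $(\mH-\mF)_{1:r,1,\ldots,1}=0$ mode by mode through $\langle\phi[G,\tau]\,p,\,\mH-\mF\rangle=0$ to conclude $\mH=\mF_{1:r,:,\ldots,:}$, then lifts to all of $\mF$ via the matrix $W$ with $V^{(1)}=W(V^{(1)})_{1:r,:}$. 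You anchor instead on the rows $\pi_s^{\T}$ of $P^{-1}$ (left eigenvectors) and contract \eqref{linear_eq} against $\pi_s$ to get intrinsic slice identities for $h^s$, whose mode-by-mode peeling forces $h^s=\gamma_s\,u^{s,2}\otimes\cdots\otimes u^{s,m}$. Since $\mF_{1:r,:,\ldots,:}=\sum_s p_s\otimes h^s$, your rank-one statement for all $s$ is exactly the paper's equality $\mF_{1:r,:,\ldots,:}=\mH$ with $c_s=\gamma_s$, and your peeling of modes plays the same role as the paper's recursive propagation; the combinatorial core coincides, but the packaging differs in a worthwhile way. What your version buys: it is self-contained, never needing the assertion that $G$ is a generating matrix for $\mH$, which the paper justifies by citing Theorem~\ref{decom_to_Mjk} --- a slightly loose citation, since that theorem's hypotheses (linear independence of the first $r$ rows of the first decomposing matrix of $\mH$, i.e.\ all $c_s\neq 0$) are not verified there, and what is really used is only the easy converse direction that eigenvector relations imply the generating equations; your one-line contraction of \eqref{linear_eq} against $\pi_s$ obtains the same information with no such detour, and your final span argument remains valid even if some $\gamma_s=0$. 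What the paper's version buys: the recovered first-mode vectors are written down explicitly as $u^{s,1}=W(c_sp_s)$, whereas your closing argument (invertibility of $P^{-1}Q$, hence $\mathrm{span}\{h^s\}=\mathrm{span}\{\tilde w^{s'}\}\subseteq\mathrm{span}\{w^s\}$) establishes existence of $U^{(1)}$ without exhibiting it. Both correctly consume the hypothesis that the first $r$ rows of $V^{(1)}$ are linearly independent --- the paper through $W$, you through the invertibility of $Q$.
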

\begin{proof}
Since the matrix $P = \bbm p_1 & \cdots & p_r \ebm$
is invertible, there exist scalars $c_1,\ldots, c_r \in \bbC$ such that
\be  \label{proof}
\mF_{1:r,1,...,1}  = c_1 p_1 +c_2 p_2
+ \cdots + c_r p_r .
\ee
Consider the new tensor
\[
\mH \, \coloneqq \, \sum_{s=1}^r c_s p_s \otimes
u^{s,2}\otimes \cdots \otimes u^{s,m} .
\]
In the following, we show that $\mF_{1:r,:,...,:} = \mH$
and there exist vectors $u^{1,1}, \ldots, u^{r,1} \in \bbC^{n_1}$
satisfying the equation \eqref{exist:u(s1)}.

%
By Theorem~\ref{decom_to_Mjk}, one can see that
the generating matrix $G$ for $\mF$
is also a generating matrix for $\mH$, so it holds that
\be \label{gpeqn:mF:mH}
\langle \phi[G,\tau] p,  \mF \rangle  =
\langle \phi[G,\tau] p,  \mH \rangle  =0, \quad
\mbox{for all} \,\, p \in \mathbb{M}_{\{1,j\}^c} .
\ee
Therefore, we have
\be  \label{GPeqn:H-hatF}
\langle  \phi[G,\tau] p, \mH - \mF \rangle  =0,  \quad
\mbox{for all} \,\, p \in \mathbb{M}_{\{1,j\}^c}.
\ee
By \eqref{proof}, one can see that
\begin{align}
(\mH - \mF)_{1:r,1,\ldots,1}=0. \label{first:zero}
\end{align}
In \eqref{GPeqn:H-hatF}, for each $\tau = (i, 2, k) \in J$
and $p = 1$, we can get
\[
(\mH - \mF )_{1:r,:,1,\ldots,1}=0.
\]
Similarly, for $\tau = (i, 2, k) \in J$ and $p=x_{3,j_3}$, we can get
\[
(\mH  - \mF )_{1:r,:,:,1,\ldots,1}=0.
\]
Continuing this, we can eventually get
$\mH  = \mF_{1:r,:,:,\ldots, :}$.
Since the matrix $(V^{(1)})_{1:r,:}$ is invertible, there exists a matrix
$W \in \mathbb{C}^{n_1\times r} $ such that
$V^{(1)} =  W (V^{(1)})_{1:r,:}.$
Observe that
\[
\mathcal{F} = W \times_1 \mF_{1:r,:,:,\ldots, :}= W \times_1 \mH.
\]
Let $u^{s,1}=W \cdot (c_sp_s)$ for $s=1,\ldots,r$.
Then the tensor decomposition (\ref{exist:u(s1)}) holds.
\end{proof}

\subsection{An algorithm for computing tensor decompositions}

%
%
Consider a tensor $\mF \in\bbC^{n_1 \times n_2 \times \cdots \times n_m}$
with a given rank $r$. Recall that the dimensions are ordered such that
$n_1 \ge n_2 \ge \cdots \ge n_m$.
We discuss how to compute the rank-$r$ tensor decomposition for $\mF$.
Recall $A[\mathcal{F},j]$, $b[\mathcal{F},j,k]$ as in \eqref{def:A_b}, for $j > 1$.
Note that $A[\mathcal{F},j]$ has the dimension $N_j \times r$, where
\be \label{Nj:notj}
N_j \, \coloneqq \,
\frac{n_2 \cdots n_m}{n_j}   .
\ee
If $r \le N_j$, then the matrices $A[\mathcal{F},j]$
have full column rank for generic cases.
For instance, $r \le N_3$ if $m=3$ and $r \le n_2$.
Since $N_2$ is the smallest,
we often use the matrices $A[\mathcal{F},j]$ for $j \ge 3$.
For convenience, denote the label set
\be \label{set:Upsi}
\Upsilon \, \coloneqq \, \{(j, k):
3 \le j \le m, 2 \le k \le n_j \}.
\ee

In the following, we consider the case that $r \le N_3$.
For each pair $(j,k) \in \Upsilon$, the linear system \eqref{linear_eq}
has a unique solution, for which we denote
\[
Y^{j,k} \, = \, M^{j,k}[G].
\]
For $j=2$, the equation \eqref{linear_eq} may not
have a unique solution if $r > N_2$.
In the following, we show how to get the tensor decomposition
without using the matrices $M^{2,k}[G]$.
By Theorem~\ref{decom_to_Mjk}, the matrices $Y^{j,k}$
are simultaneously diagonalizable, that is,
there is an invertible matrix $P \in \bbC^{r \times r}$
such that all products $P^{-1}  Y^{j,k}  P$
are diagonal for every $(j,k) \in \Upsilon$.
Suppose they are diagonalized as
\be \label{diag:eig:Yjk}
P^{-1}  Y^{j,k}  P \, = \,
\mbox{diag}[\lambda_{j,k,1},\lambda_{j,k,2},\ldots,\lambda_{j,k,r}]
\ee
with the eigenvalues $\lambda_{j,k,s}$. Write $P$ in the column form
\[
P \, = \, \bbm p_1 & \cdots & p_r \ebm.
\]
For each $s =1,\ldots,r$ and $j=3,\ldots, m$, let
\be \label{vsj:j3:m}
v^{s,j}  \, \coloneqq \,  (1, \lambda_{j,2, s}, \ldots, \lambda_{j,n_j,s} ).
\ee
Suppose $\mathcal{F}$ has the rank-$r$ decomposition
\[
\mathcal{F} \, = \,  \sum_{s=1}^r u^{s,1} \otimes ... \otimes u^{s,m}.
\]
Under the assumptions of Theorem~\ref{decom_to_Mjk},
the linear system \eqref{linear_eq}
has a unique solution  for each pair $(j,k) \in \Upsilon$.
For every $j \in \{3,...,m\}$, there exist scalars $c_{s,j},c_{s,1}$ such that
\[
 u^{s,j} \, = \, c_{s,j} v^{s,j}, \quad
 u^{s,1} \, = \, c_{s,1} p_s  .
\]
Then, we consider the sub-tensor equation
in the vector variables $y_1, \ldots, y_r \in \bbC^{n_2}$
\be \label{eq:ys:j=2}
\mF_{1:r,:,\ldots, :} \, = \, \sum_{s=1}^r p_s \otimes y_s \otimes
v^{s,3} \otimes \cdots  \otimes v^{s,m}.
\ee
There are $r n_2 \cdots n_m$ equations and $rn_2$ unknowns.
This overdetermined linear system has solutions such that
\[
y_s=c_{s,2} u^{s,2}, ~\mathrm{for~some}~c_{s,2} \in \mathbb{C}.
\]
After all $y_s$ are obtained,  we solve the linear equation in
$z_1, \ldots, z_r \in \bbC^{n_1-r}$
\be  \label{exist:v(s1)}
\mF_{r+1:n_1,:,\ldots, :} =\sum_{s=1}^r z_s \otimes y_s
\otimes v^{s,3} \otimes \cdots  \otimes v^{s,m} .
\ee
After all $y_s, z_s$ are obtained,
we choose the vectors ($s=1,\ldots,r$)
\[
v^{s,1} = \bbm p_s \\ z_s \ebm, \quad    v^{s,2} = y_s.
\]
Then we get the tensor decomposition
\be  \label{TDF:alg:vsj}
\mF  \, = \, \sum_{s=1}^r   v^{s,1} \otimes v^{s,2} \otimes \cdots  \otimes v^{s,m} .
\ee

Summarizing the above, we get the following algorithm
for computing tensor decompositions when $r\le n_1$ and $r \le N_3$.
Suppose the dimensions are ordered such that
$n_1 \ge n_2 \ge \cdots \ge n_m$.

\begin{alg} \label{alg:TD:r<=n2}
(Rank-$r$ tensor decomposition.)

\begin{itemize}

\item[Input:] A tensor $\mathcal{F}\in \bbC^{n_1  \times ... \times n_m}$
    with rank $r \leq \min (n_1, N_3)$.

\item[Step~1] For each pair $(j,k) \in \Upsilon$,
solve the matrix equation for the solution $Y^{j,k}$:
\be
 A[\mF,j] Y^{j,k}  \, = \, b[\mathcal{F},j,k].
\ee

\item[Step~2] Choose generic scalars $\xi_{j,k} $.
Then compute the eigenvalue decomposition $P^{-1} Y P = D$ for the matrix
\[
 Y \, \coloneqq \, \frac{1}{\sum\limits_{ (j,k) \in \Upsilon}  \xi_{j,k}}
 \sum\limits_{ (j,k) \in \Upsilon  }
 \xi_{j,k}  Y^{j,k} .
\]

\item[Step~3] For $s = 1, \ldots, r$ and $j \ge 3$,
let $v^{s,j}$ be the vectors as in \eqref{vsj:j3:m}.

\item[Step~4] Solve the linear system \eqref{eq:ys:j=2}
for vectors $y_1, \ldots, y_r$.

\item[Step~5] Solve the linear system \eqref{exist:v(s1)}
for vectors $z_1, \ldots, z_r$.

\item[Step~6] For each $s = 1, \ldots, r$,
let $v^{s,1} = \bbm p_s \\ z_s \ebm$ and $v^{s,2} = y_s$.

\item [Output:] The tensor rank-$r$ decomposition as in \eqref{TDF:alg:vsj}.

\end{itemize}
\end{alg}

The correctness of Algorithm~\ref{alg:TD:r<=n2} is justified as follows.

\begin{theorem}  \label{rank1:decom}
Suppose $n_1 \ge n_2 \ge \cdots \ge n_m$ and
$r \le \min (n_1, N_3)$ as in \eqref{Nj:notj}.
For a generic  tensor $\mF$ of rank-$r$,
Algorithm~\ref{alg:TD:r<=n2} produces a rank-$r$ tensor decomposition for $\mF$.
\end{theorem}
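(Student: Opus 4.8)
The plan is to verify that each step of Algorithm~\ref{alg:TD:r<=n2} recovers the intended data when $\mF$ is a generic rank-$r$ tensor, and then to check that the assembly in the last step reproduces $\mF$. I would begin by fixing one decomposition $\mF = \sum_{s=1}^r u^{s,1}\otimes\cdots\otimes u^{s,m}$ and recording the conditions that hold on a Zariski-open subset of $\sigma_r$: the first $r$ rows of $U^{(1)}$ are linearly independent (possible since $r\le n_1$), each $(u^{s,j})_1\ne 0$ for $j\ge 2$ so that we may normalize $(u^{s,j})_1=1$, and, crucially, the normalized tuples $u^{s,3},\ldots,u^{s,m}$ are pairwise distinct in $s$. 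Under the first two conditions, Theorem~\ref{decom_to_Mjk} supplies a generating matrix $G$ satisfying \eqref{linear_eq} together with the eigenvalue identity \eqref{Mjk[G]:eigeqn}, so the vectors $(u^{s,1})_{1:r}$ are common eigenvectors of every $M^{j,k}[G]$ with eigenvalue $(u^{s,j})_k$.

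Next I would justify Steps~1 and~2. For $(j,k)\in\Upsilon$ we have $j\ge 3$, and since the dimensions are decreasing, $N_j\ge N_3\ge r$; I would show that for generic $\mF$ the matrix $A[\mF,j]$ has full column rank $r$ (exhibiting a single tensor where this holds suffices, as the full-rank locus is Zariski-open). Then the system \eqref{linear_eq} has a unique solution, which must equal $M^{j,k}[G]$, giving $Y^{j,k}=M^{j,k}[G]$. The common eigenvectors then make all $Y^{j,k}$ simultaneously diagonalizable by the invertible matrix $P$ whose columns are $(u^{s,1})_{1:r}$. For generic scalars $\xi_{j,k}$ the combination $Y$ has simple spectrum: its eigenvalue on $(u^{s,1})_{1:r}$ is $\big(\sum\xi_{j,k}\big)^{-1}\sum_{(j,k)}\xi_{j,k}(u^{s,j})_k$, and because the eigenvalue tuples $\big((u^{s,j})_k\big)_{(j,k)\in\Upsilon}$ are pairwise distinct in $s$, a generic linear functional separates them. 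Simple spectrum forces the eigenbasis of $Y$ to be unique up to scaling and permutation, so the computed $p_s$ equal $\gamma_s(u^{s,1})_{1:r}$ for nonzero scalars $\gamma_s$, and $\lambda_{j,k,s}=(u^{s,j})_k$; hence the vectors $v^{s,j}$ of \eqref{vsj:j3:m} equal the normalized $u^{s,j}$ for $j\ge 3$.

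Finally I would treat Steps~4–6. Substituting $p_s=\gamma_s(u^{s,1})_{1:r}$ and $v^{s,j}=u^{s,j}$ for $j\ge 3$ into the known expansion of $\mF_{1:r,:,\ldots,:}$ shows that $y_s=\gamma_s^{-1}u^{s,2}$ solves \eqref{eq:ys:j=2}, so that overdetermined system is consistent; its solution is unique because the $p_s$ are linearly independent, which forces each $y_s\otimes v^{s,3}\otimes\cdots\otimes v^{s,m}$ to vanish in the associated homogeneous system and hence $y_s=0$. The same reasoning makes \eqref{exist:v(s1)} consistent with the unique solution $z_s=\gamma_s(u^{s,1})_{r+1:n_1}$. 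Concatenating yields $v^{s,1}=\gamma_s u^{s,1}$ and $v^{s,2}=\gamma_s^{-1}u^{s,2}$, so each reshaped rank-one term equals $u^{s,1}\otimes\cdots\otimes u^{s,m}$ and the sum \eqref{TDF:alg:vsj} reproduces $\mF$. This is essentially the telescoping slice-by-slice argument of Theorem~\ref{onetoone}, specialized so as to avoid the index $j=2$ (where $N_2$ may be smaller than $r$).

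I expect the main obstacle to be the genericity bookkeeping rather than any single hard computation: one must pin down a single Zariski-open subset of $\sigma_r$ on which, simultaneously, (i) $A[\mF,j]$ has full column rank for all $j\ge 3$, (ii) the eigenvalue tuples separate the summands so that $Y$ has simple spectrum, and (iii) the factors are nondegenerate enough that the Step~4–5 systems are uniquely solvable. Producing explicit witnesses for (i) and (ii), for instance a decomposition with Vandermonde-like factor matrices, is the delicate point, since the statement is only claimed generically and each open condition must be shown nonempty.
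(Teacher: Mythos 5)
Your proposal is correct, and it takes a related but meaningfully different route from the paper. The paper's entire proof is one sentence --- the statement ``can be implied by Theorem~\ref{onetoone}'' --- where Theorem~\ref{onetoone} is proved by a generating-polynomial telescoping argument (propagating \eqref{GPeqn:H-hatF} slice by slice) that works for an \emph{arbitrary} invertible $P$ simultaneously diagonalizing the $M^{j,k}[G]$, with no simple-spectrum claim and no genericity of $\xi$. You instead pin the diagonalizer down: generic $\xi$ plus pairwise distinct eigenvalue tuples give $Y$ a simple spectrum, so the computed $p_s$ must be scalar multiples of the true $(u^{s,1})_{1:r}$ up to permutation, after which Steps~4--6 are verified by direct substitution together with a linear-independence uniqueness argument. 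Each route buys something. The paper's route is short and robust to how the eigendecomposition is computed, but its citation is not self-contained: Theorem~\ref{onetoone} as stated diagonalizes all modes $j \ge 2$, while Algorithm~\ref{alg:TD:r<=n2} only diagonalizes $j \ge 3$ and recovers mode $2$ from the linear system \eqref{eq:ys:j=2}; moreover, the claim that the $P$ computed in Step~2 from the \emph{single} matrix $Y$ simultaneously diagonalizes every $Y^{j,k}$ is exactly the simple-spectrum fact you prove, so your argument supplies rigor that the one-line proof delegates to the informal discussion preceding the algorithm, along with the genericity bookkeeping (full column rank of $A[\mF,j]$, eigenvalue separation). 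Two small corrections that do not affect correctness: in Step~5 uniqueness requires linear independence of the vectors $\hat{y}_s \boxtimes v^{s,3} \boxtimes \cdots \boxtimes v^{s,m}$ (generically true since $n_2 \cdots n_m \ge r$), not of the $p_s$; and in fact uniqueness there is unnecessary, since consistency alone suffices --- stacking $p_s$ over any solution $z_s$ reproduces $\mF$. Finally, the witnesses you flag as delicate follow easily from the factorization $A[\mF,j] = W_j \cdot \mathrm{diag}\big((u^{1,j})_1, \ldots, (u^{r,j})_1\big) \cdot \big((U^{(1)})_{1:r,:}\big)^T$, where $W_j$ has columns $\boxtimes_{i \notin \{1,j\}}\, u^{s,i}$: Vandermonde choices $u^{s,i} = (1, t_s, t_s^2, \ldots)$ with distinct generic $t_s$ make all three factors full rank, and the same choice separates the eigenvalue tuples.
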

\begin{proof}
This can be implied by Theorem~\ref{onetoone}.
\end{proof}

\subsection{Tensor decompositions via reshaping}
\label{ssc:td:reshape}

A tensor $\mF \in \bbC^{n_1 \times \cdots \times n_m}$
can be reshaped as a cubic order tensor $\widehat{\mF}$ as in \eqref{outer_product_form}.
One can apply Algorithm~\ref{alg:TD:r<=n2} to compute the tensor decomposition
\eqref{outer_product_form} for $\widehat{\mF}$.
If the decomposing vectors $w^{s,1}, w^{s,2}, w^{s,3}$
can be reshaped to rank-$1$ tensors, then we can convert
\eqref{outer_product_form} to a tensor decomposition for $\mF$.
This is justified by Theorem~\ref{fla}, under some assumptions.
A benefit for doing this is that we may be able to compute tensor decompositions
for the case that \[ N_3 < r \le p_2, \]
with the dimension $p_2$ as in Theorem~\ref{fla}.
This leads to the following algorithm for computing tensor decompositions.

\begin{alg}  	\label{algorithm:flattening}
(Tensor decompositions via reshaping.)
Let $p_1, p_2, p_3$ be dimensions as in Theorem~\ref{fla}.

\begin{itemize}

\item[Input:] A tensor $\mF \in \bbC^{n_1 \times \cdots  \times n_m}$
with rank $r \le p_2$.

\item[Step~1] Reshape the tensor $\mF$ to
a cubic tensor $\widehat{\mF} \in \bbC^{p_1 \times p_2 \times p_3}$
as in \eqref{outer_product_form}.

\item[Step~2] Use Algorithm~\ref{alg:TD:r<=n2} to compute the
tensor decomposition
\be \label{TD:reshape}
\widehat{\mF} \, = \, \sum_{s=1}^r
w^{s,1}\otimes w^{s,2} \otimes w^{s,3} .
\ee

\item[Step~3] If all $w^{s,1}, w^{s,2}, w^{s,3}$ can be expressed as
outer products of rank-$1$ tensors as in \eqref{KRprod:vwz},
then output the tensor decomposition as in \eqref{mATD:usj}.
If one of  $w^{s,1}, w^{s,2}, w^{s,3}$ cannot be expressed
as in \eqref{KRprod:vwz}, then the reshaping does not produce
a tensor decomposition for $\mF$.

\item [Output:] A tensor decomposition for $\mF$ as in \eqref{mATD:usj}.

\end{itemize}

\end{alg}

For Algorithm~\ref{algorithm:flattening},
we have a similar conclusion like Theorem~\ref{rank1:decom}.
For cleanness of the paper, we do not repeat it here.

\section{low rank Tensor Approximations}
\label{sec:lowTA}

When a tensor $\mF \in \mathbb{C}^{n_1 \times ... \times n_m}$
has the rank bigger than $r$, the linear systems in
Algorithm~\ref{alg:TD:r<=n2} may not be consistent.
However, we can find linear least squares solutions for them.
This gives an algorithm for computing low rank tensor approximations.
Recall the label set $\Upsilon$ as in \eqref{set:Upsi}.
The following is the algorithm.
\begin{alg} \label{alg:LRTA}
(Rank-$r$ tensor approximation.)
\begin{itemize}

\item [Input:]
A tensor $\mathcal{F}\in \bbC^{n_1 \times n_2 \times ... \times n_m} $
and a rank $r \leq \min (n_1, N_3)$.

\item [Step~1]  For each pair $(j,k) \in \Upsilon$,
solve the linear least squares problem
\be 	\label{LS:Yjk:j>=3}
\min\limits_{ Y^{j,k} \in \bbC^{r \times r} }  \quad
\bigg\lVert A[\mathcal{F},j] (Y^{j,k} )^T - b[\mathcal{F},j,k] \bigg\rVert^2 .
\ee
Let $\hat{Y}^{j,k}$ be an optimizer.

\item [Step~2]
Choose generic scalars $\xi_{j,k}$
and let
\[
\hat{Y}[\xi] \, = \, \frac{1}{\sum\limits_{ (j,k) \in \Upsilon}  \xi_{j,k}}
\sum\limits_{ (j,k) \in \Upsilon } \xi_{j,k} \hat{Y}^{j,k} .
\]
Compute the eigenvalue decomposition
$\hat{P}^{-1} \hat{Y}[\xi] \hat{P} = \Lambda$ such that
$\hat{P} = \bbm \hat{p}_1 & \cdots & \hat{p}_r \ebm$
is invertible and $\Lambda$ is diagonal.

\item [Step~3]  For each pair $(j,k) \in \Upsilon$,
select the diagonal entries
\[
\mbox{diag}[\hat{\lambda}_{j,k,1}  \,\, \hat{\lambda}_{j,k,2}
\,\, \ldots \,\, \hat{\lambda}_{j,k,r}   ]  \, = \,
\mbox{diag} (\hat{P}^{-1} \hat{Y}^{j,k} \hat{P}).
\]
For each $s=1,\ldots,r$ and $j= 3, \ldots, m$, let
\[
\hat{v}^{s,j} = (1, \hat{\lambda}_{j,2,2}, \ldots, \hat{\lambda}_{j,n_j,s}  ).
\]

\item [Step~4] Let $(\hat{y}_1, \ldots, \hat{y}_r)$
be an optimizer for the following least squares:
\be  \label{LS:hatps}
\min\limits_{ (y_1, \ldots, y_r) }  \quad \bigg\lVert
\mF_{1:r, :, \ldots, :} - \sum_{s=1}^r
\hat{p}_s \otimes y_s \otimes \hat{v}^{s,3} \otimes \cdots \otimes  \hat{v}^{s,m}
\bigg\rVert^2.
\ee

\item [Step~5] Let $(\hat{z}_1, \ldots, \hat{z}_r)$
be an optimizer for the following least squares:
\be  \label{LS:hatzs}	
\min\limits_{ (z_1, \ldots, z_r) }  \quad \bigg\lVert
\mF_{r+1:n_1, :, \ldots, :} - \sum_{s=1}^r z_s \otimes \hat{y}_s
\otimes \hat{v}^{s,3} \otimes \cdots \otimes  \hat{v}^{s,m}
\bigg\rVert^2.
\ee

\item [Step~6]  Let $\hat{v}^{s,1} = \bbm \hat{p}_s \\ \hat{z}_s \ebm$
and $\hat{v}^{s,2} = \hat{y}_s$ for each
$s=1,\ldots, r$.

\item [Output:] The rank-$r$ approximation tensor
\be	\label{LS:Xgp}
\mathcal{X}^{gp} \, \coloneqq \,
\sum_{s=1}^r \hat{v}^{s,1} \otimes \hat{v}^{s,2} \otimes ... \otimes \hat{v}^{s,m}.
\ee

\end{itemize}

\end{alg}

If $\mathcal{F}$ is sufficiently close to a rank-$r$ tensor,
then $\mathcal{X}^{gp}$ is expected to be a good rank-$r$ approximation.
Mathematically, the tensor $\mathcal{X}^{gp}$ produced by Algorithm~\ref{alg:LRTA}
may not be the best rank-$r$ approximation.  However, in computational practice,
we can use \eqref{LS:Xgp} as a starting point
to solve the nonlinear least squares optimization
\be 	\label{least_squares_3}
\min\limits_{ (u^{s,1}, \ldots, u^{s,m}) } \bigg\lVert
\mF - \sum_{s=1}^r u^{s,1} \otimes u^{s,2} \otimes ... \otimes u^{s,m}
\bigg\rVert^2.
\ee
to improve the approximation quality.
Let $\mX^{opt}$ be the rank-$r$ approximation tensor
\be	\label{LS:Xopt}
\mathcal{X}^{opt} \, \coloneqq \,
\sum_{s=1}^r u^{s,1} \otimes u^{s,2} \otimes ... \otimes u^{s,m}
\ee
which is an optimizer to \reff{least_squares_3} obtained by
nonlinear optimization methods with
$\mX^{opt}$ as the initial point.

\subsection{Approximation error analysis}

Suppose the tensor $\mF$ has the best (or nearly best) rank-$r$ approximation
\be \label{X_bs}
\mX^{bs} \, \coloneqq \, \sum_{s=1}^{r}
(x^{s,1}) \otimes (x^{s,2}) \otimes ... \otimes (x^{s,m}).
\ee
Let $\mE$ be the tensor such that
\begin{align} 	\label{tensor_error}
\mF = \mathcal{X}^{bs} + \mE.
\end{align}
We analyze the approximation performance of $\mX^{gp}$ when the distance
$\eps = \Vert \mathcal{E}\Vert$ is small.
For a generating matrix $G$ and a generic $\xi = (\xi_{j,k})_{(j,k) \in  \Upsilon}$,
denote that
\be  	\label{xi_sum}
M[\xi,G] \, \coloneqq \, \frac{1}{\sum\limits_{ (j,k) \in \Upsilon}  \xi_{j,k}}
\sum\limits_{(j,k) \in \Upsilon}
\xi_{j,k} M^{j,k}[G].
\ee
Recall the $A[\mathcal{F},j]$, $b[\mathcal{F},j,k]$ as in (\ref{def:A_b}).
Note that
\begin{align}
\begin{split}
	A[\mathcal{F},j]=&~A[\mathcal{X}^{bs},j]+A[\mathcal{\mathcal{E}},j], \\
	b[\mathcal{F},j,k]=&~b[\mathcal{X}^{bs},j,k]+b[\mathcal{\mathcal{E}},j,k].
\end{split}
\end{align}
Suppose $\left(x^{s, j}\right)_{1} \neq 0$ for $j=2, \ldots, m $.

\begin{theorem}  \label{thm:aprox}
Let $\mathcal{X}^{gp}$ be produced by Algorithm~\ref{alg:LRTA}.
Let $\mathcal{F},\mathcal{X}^{bs},\mathcal{X}^{opt},\mathcal{E},x^{s,j},\xi_{j,k}$ be as above.
Assume the following conditions hold:
\begin{enumerate}

\item [(i)] The subvectors $(x^{1,1})_{1:r}, \ldots,
(x^{r,1})_{1:r}$ are linearly independent.

\item [(ii)] All matrices $A[ \mathcal{F},j ]$
and $A[ \mathcal{X}^{bs},j ]$ ($3 \leq j \leq m$) have full column rank.

\item [(iii)] The first entry
$\left(x^{s, j}\right)_{1} \neq 0$ for all $j=2, \ldots, m $.

\item [(iv)] The following scalars are pairwisely distinct
\be  \label{pairwise_distinct}
\sum_{ (j,k) \in \Upsilon  }   \xi_{j, k}(x^{1,j})_k, ... ,
\sum_{ (j,k) \in \Upsilon  } \xi_{j ,k}(x^{r,j})_k .
\ee
\end{enumerate}
If the distance $\epsilon = \lVert \mF - \mX^{bs} \rVert$
is sufficiently small, then
\be
\lVert \mathcal{X}^{bs}-\mathcal{X}^{gp} \rVert = O(\epsilon) \quad
\quad \and \quad
\lVert \mathcal{F}-\mathcal{X}^{gp} \rVert = O(\epsilon).
\ee
where the constants in the above $O(\cdot)$ only depend on $\mathcal{F}$ and $\xi$.
\end{theorem}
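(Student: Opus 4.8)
The plan is to trace the perturbation $\mathcal{E}$ through each step of Algorithm~\ref{alg:LRTA} and show that each computed quantity differs from its exact ($\epsilon=0$) counterpart by $O(\epsilon)$, using the stability of linear least squares and eigenvalue decompositions. The key observation is that when $\epsilon=0$, i.e. $\mathcal{F}=\mathcal{X}^{bs}$, the algorithm reproduces $\mathcal{X}^{bs}$ exactly by Theorem~\ref{onetoone}; so the whole argument is a first-order sensitivity analysis around that exact solution, with conditions (i)--(iv) guaranteeing that every intermediate object is well-conditioned.

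\textbf{Step~1 (least-squares matrices).} First I would analyze Step~1 of the algorithm. By condition (ii), the matrices $A[\mathcal{X}^{bs},j]$ have full column rank, so the least-squares problem \eqref{LS:Yjk:j>=3} has a unique solution depending smoothly on the data. Since $A[\mathcal{F},j]=A[\mathcal{X}^{bs},j]+A[\mathcal{E},j]$ and $b[\mathcal{F},j,k]=b[\mathcal{X}^{bs},j,k]+b[\mathcal{E},j,k]$, with $\|A[\mathcal{E},j]\|$ and $\|b[\mathcal{E},j,k]\|$ both $O(\epsilon)$, the standard perturbation bound for the pseudoinverse (the map $(A,b)\mapsto A^\dagger b$ is locally Lipschitz where $A$ has full column rank) gives $\|\hat{Y}^{j,k}-Y^{j,k}\|=O(\epsilon)$, where $Y^{j,k}=M^{j,k}[G]$ is the exact solution. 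Consequently $\|\hat{Y}[\xi]-M[\xi,G]\|=O(\epsilon)$ for the averaged matrix of Step~2.

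\textbf{Step~2 (eigendecomposition).} Next I would invoke condition (iv): the eigenvalues of the exact matrix $M[\xi,G]$ are precisely the scalars in \eqref{pairwise_distinct} (this follows from \eqref{Mjk[G]:eigeqn} in Theorem~\ref{decom_to_Mjk}), and condition (iv) says they are pairwise distinct. A matrix with distinct eigenvalues is diagonalizable and its eigenvalue decomposition is locally a smooth function of the entries; hence the perturbed eigenvalues satisfy $|\hat{\lambda}_{j,k,s}-\lambda_{j,k,s}|=O(\epsilon)$ and the eigenvectors can be chosen so that $\|\hat{p}_s-p_s\|=O(\epsilon)$ after a suitable normalization. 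This immediately yields $\|\hat{v}^{s,j}-v^{s,j}\|=O(\epsilon)$ for $j\ge 3$ from \eqref{vsj:j3:m}, and $\|\hat{p}_s - p_s\|=O(\epsilon)$ for the first-mode eigenvectors. \textbf{This eigenvalue-perturbation step is the main obstacle}: one must argue that the eigenvector matrix $\hat P$ stays invertible and can be \emph{continuously} matched to $P$ (no eigenvalue collisions or branch-swapping), which is exactly what the distinctness hypothesis (iv) secures, and one must track that the constant in $O(\epsilon)$ depends only on $\mathcal{F}$ and $\xi$ through the eigenvalue gaps and the conditioning of $P$.

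\textbf{Steps~4--6 (recovering the remaining factors) and conclusion.} Finally I would handle the two remaining least-squares solves. In Step~4, the coefficient system for $(y_1,\ldots,y_r)$ in \eqref{LS:hatps} is built from $\hat p_s$ and $\hat v^{s,j}$, which are $O(\epsilon)$-close to their exact values; by condition (i) the exact system has full column rank (the $p_s$ are, up to the invertible change of basis, the independent subvectors $(x^{s,1})_{1:r}$), so again the least-squares solution is stable and $\|\hat y_s - c_{s,2}u^{s,2}\|=O(\epsilon)$. The same reasoning applies to Step~5 for $(\hat z_1,\ldots,\hat z_r)$. Assembling $\hat v^{s,1},\hat v^{s,2}$ and comparing the outer-product sum \eqref{LS:Xgp} termwise with $\mathcal{X}^{bs}$, multilinearity of the outer product gives $\|\mathcal{X}^{bs}-\mathcal{X}^{gp}\|=O(\epsilon)$. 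The second bound then follows from the triangle inequality,
\[
\|\mathcal{F}-\mathcal{X}^{gp}\|\le \|\mathcal{F}-\mathcal{X}^{bs}\|+\|\mathcal{X}^{bs}-\mathcal{X}^{gp}\|=\epsilon+O(\epsilon)=O(\epsilon),
\]
and since every constant introduced above depends only on the conditioning quantities of $\mathcal{F}$ (full-rank margins, eigenvalue gaps, the norm of $P^{-1}$) and on $\xi$, the claimed dependence of the $O(\cdot)$ constants holds.
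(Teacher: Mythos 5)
Your proposal is correct and follows essentially the same route as the paper's proof: perturbation analysis of the linear least squares in Step~1 (using condition (ii)), eigenvalue/eigenvector perturbation for the averaged matrix whose eigenvalues are the sums in \eqref{pairwise_distinct} (using conditions (i), (iii), (iv) via Theorem~\ref{decom_to_Mjk}), stability of the least squares in Steps~4--5, and the triangle inequality to conclude. The paper makes one point slightly more explicit than you do --- it first constructs the exact generating matrix $G^{bs}$ for $\mathcal{X}^{bs}$ from conditions (i) and (iii) so that both $\hat{Y}^{j,k}$ and $M^{j,k}[G^{bs}]$ are pseudo-inverse solutions of nearby systems --- but this is exactly what your "exact solution $Y^{j,k}=M^{j,k}[G]$" implicitly relies on, so there is no substantive difference.
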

\begin{proof}
By conditions (i) and (iii) and by Theorem~\ref{decom_to_Mjk},
there exists a generating matrix $G^{bs}$ for $\mX^{bs}$ such that
\be
A[\mathcal{X}^{b s}, j] (M^{j,k}[G^{bs}])^T \, = \,  b[\mathcal{X}^{b s},j,k]
\ee
for all $j \in \{2,\ldots,m\}$ and $k \in \{2,\ldots,n_j\}$.
Note that $Y^{j,k}$ is the least squares solution to  \eqref{LS:Yjk:j>=3},
so for each $(j,k) \in \Upsilon$,
\[
Y^{j,k}=A[\mathcal{F}, j]^{\dagger} \cdot b[\mathcal{F},j,k], \quad
M^{j,k}[G^{bs}_0]=A[\mathcal{X}^{b s}, j]^{\dagger} \cdot b[\mathcal{X}^{b s}, j,k ].
\]
(The super script $^\dagger$ denotes the Pseudo-inverse of a matrix.)
By (\ref{tensor_error}), for $j=2,\ldots,m$, we have
\be  	    \label{purterbation:ABerror}
\baray{l}
\left\|A[\mathcal{F}, j]-A[\mathcal{X}^{b s}, j]\right\|_{F} \le
  \left\| \mF - \mX^{bs} \right\|  \le     \epsilon,  \\
  \left\|b[\mathcal{F}, j,k ]-b[\mathcal{X}^{b s}, j,k ]\right\|_{F} \le
    \left\| \mF - \mX^{bs} \right\|  \le      \epsilon.
\earay
\ee
Hence, by the condition (ii), if $\epsilon>0$ is small enough, we have
\be 	    \label{purterbation:m3kerror}
\left\| Y^{j, k}  - M^{j, k}[G^{bs}]\right\| =  O(\epsilon).
\ee
for all $(j,k) \in \Upsilon$.
This follows from perturbation analysis for linear least squares
(see \cite[Theorem~3.4]{Demmel}).

By (\ref{X_bs}) and Theorem \ref{decom_to_Mjk},
for $s=1, \ldots, r$ and $(j,k) \in \Upsilon$, it holds that
\[
M^{j, k}[G^{bs}] \left(x^{s, 1}\right)_{1:r}
\quad = \quad
\left(x^{s, j}\right)_{k} 	\left(x^{s, 1}\right)_{1:r}  .
\]
This means that each $\left(x^{s, 1}\right)_{1: r}$
is an eigenvector of $M^{j, k}[G^{bs}]$,
associated to the eigenvalue $\left(x^{s, j}\right)_{k}$,
for each $s=$ $1, \ldots, r$.
The matrices $M^{j, k}[G^{bs}]$ are simultaneously diagonalizable,
by the condition (i). So $M[\xi,G^{bs}]$ is also diagonalizable.
Note the eigenvalues of $M[\xi,G^{bs}]$ are the sums in (\ref{pairwise_distinct}).
They are distinct from each other, by the condition (iv).
When $\epsilon>0$ is small enough, $M[\xi,G^{bs}]$ also has distinct eigenvalues.
Write that
\[
Q = \bbm (x^{1,1})_{1:r}  & \cdots  & (x^{r,1})_{1:r} \ebm .
\]
Note that $Q^{-1} M[\xi,G^{bs}]Q = D$
is an eigenvalue decomposition.
Up to a scaling on $\hat{P}$ in algorithm \ref{alg:LRTA}, it holds that
\be	\label{schur_bound}
\| \hat{p}_s - x^{s,1} \|_{2}=O(\epsilon), \quad
\|D - \Lambda \|_{F}=O(\epsilon).
\ee
We refer to \cite{chatelin2012}
for the perturbation bounds in \eqref{schur_bound}.
The constants in the above $O(\cdot)$ eventually only depend on $\mF,\xi$.

Note that $(\hat{y}_s,\ldots, \hat{y}_r)$ is the least squares
solution to \eqref{LS:hatps} and
\be
\mX^{bs}_{1:r, :, \ldots, :} = \sum_{s=1}^r
x^{s,1} \otimes x^{s,2} \otimes x^{s,3} \otimes \cdots \otimes  x^{s,m} .
\ee
Due to perturbation analysis of linear least squares, we also have
\be	\label{bound:ys-us2}
\| \hat{y}_s - x^{s,2} \|_{2}=O(\epsilon)  .
\ee
Note that the subvectors $(x^{s,1})_{r+1:n_1}$
satisfy the equation
\be
\mX^{bs}_{r+1:n_1, :, \ldots, :} = \sum_{s=1}^r
(x^{s,1})_{r+1:n_1} \otimes x^{s,2}  \otimes \cdots \otimes  x^{s,m} .
\ee
Recall that $(\hat{z}_1, \ldots, \hat{z}_r)$
is the least squares solution to \eqref{LS:hatzs}.
Due to perturbation analysis of linear least squares,
we further have the error bound
\be
\| (x^{s, 1})_{r+1:n_1} - \hat{z}_s  \|_{2}= O (\epsilon) .
\ee
Summarizing the above, we eventually get
$\|\mX^{g p}-\mX^{bs} \|=O(\epsilon)$, so
\[
\left\|\mathcal{F}-\mathcal{X}^{g p}\right\| \leq
\left\|\mathcal{F}-\mathcal{X}^{b s}\right\| +
\left\|\mathcal{X}^{b s}-\mathcal{X}^{g p}\right\| = O(\epsilon) .
\]
The constant for the above $O(\cdot)$ eventually only
depends on $\mF$, $\xi$.
\end{proof}

\subsection{Reshaping for low rank approximations}

Similar to tensor decompositions, the reshaping trick as in
Section~\ref{ssc:td:reshape} can also be
used for computing low rank tensor approximations.
For $m>3$, a tensor $\mF \in \bbC^{n_1 \times n_2 \times \cdots \times n_m}$
can be reshaped as a cubic tensor $\widehat{\mF} \in \bbC^{p_1 \times p_2 \times p_3}$
as in \eqref{outer_product_form}.
Similarly, Algorithm~\ref{alg:LRTA} can be used to compute
low rank tensor approximations.
Suppose the computed rank-$r$ approximating tensor
for $\widehat{\mF}$ is
\be  	\label{widehatX:gp}
\widehat{\mX}^{gp}:=\sum_{s=1}^{r}
\hat{w}^{s,1} \otimes \hat{w}^{s,2} \otimes \hat{w}^{s,3}.
\ee
Typically, the decomposing vectors
$\hat{w}^{s,1}, \hat{w}^{s,2},\hat{w}^{s,3}$
may not be reshaped to rank-$1$ tensors.
Suppose the reshaping is such that
$I_1 \cup I_2 \cup I_3 =\{1,2,...,m\}$
is a union of disjoint label sets and the reshaped dimensions are
\[
p_1 = \prod_{i \in I_1} n_i, \quad
p_2 = \prod_{i \in I_2} n_i, \quad
p_3 = \prod_{i \in I_3} n_i .
\]
Let $m_i = |I_i|$ for $i=1,2,3$. By the reshaping, the vectors
$\hat{w}^{s,i}$ can be reshaped back to a tensor
$\hat{W}^{s,i}$ of order $m_i$, for each $i=1,2,3$.
If $m_i = 1$, $\hat{W}^{s,i}$ is a vector.
If $m_i = 2$, we can find a best rank-$1$
matrix approximation for $\hat{W}^{s,i}$.
If $m_i \ge 3$, we can apply Algorithm~\ref{alg:LRTA} with $r=1$
to get a rank-$1$ approximation for $\hat{W}^{s,i}$.
In application, we are mostly interested in reshaping such that
all $m_i \le 2$. Finally, this produces
a rank-$r$ approximation for $\mF$.

The following is a low rank tensor approximation
algorithm via reshaping tensors.

\begin{alg}   \label{alg:LRTA:reshape}
(low rank tensor approximations via reshaping.)

\begin{itemize}

\item [Input] A tensor $\mF \in \bbC^{n_1 \times n_2 \times ... \times n_m}$ and a rank $r$.

\item [Step~1] Reshape $\mF$ to a cubic order tensor
$\widehat{\mF} \in \bbC^{p_1 \times p_2 \times p_3}$.

\item [Step~2] Use Algorithm~\ref{alg:LRTA} to compute
a rank-$r$ approximating tensor
$\widehat{\mX}^{gp}$ as in \eqref{widehatX:gp} for $\widehat{\mF}$.

\item [Step~3] For each $i=1,2,3$, reshape each vector $\hat{w}^{s,i}$
back to a tensor $\widehat{W}^{s,i}$ of order $m_i$ as above.

\item [Step~4] For each $i=1,2,3$, compute a rank-$1$ approximating
tensor $\widehat{X}^{s,i}$ for $\widehat{W}^{s,i}$ of order $m_i$ as above.

\item [Output]  Reshape the sum
$\sum\limits_{s=1}^r  \widehat{X}^{s,1} \otimes  \widehat{X}^{s,2} \otimes \widehat{X}^{s,3}$
to a tensor in $\bbC^{n_1 \times n_2 \times ... \times n_m}$,
which is a rank-$r$ approximation for $\mF$.

\end{itemize}

\end{alg}

We can do a similar approximation analysis for Algorithm~\ref{alg:LRTA:reshape}
as for Theorem~\ref{thm:aprox}.
For cleanness of the paper, we do not repeat that.

\section{Numerical Experiments}
\label{sec:num}

In this section, we apply Algorithms~\ref{alg:TD:r<=n2} and \ref{alg:LRTA}
to compute tensor decompositions and low rank tensor approximations.
We implement these algorithms in {\tt MATLAB} 2020b
on a workstation with Ubuntu 20.04.2 LTS,
Intel® Xeon(R) Gold 6248R CPU @ 3.00GHz and memory 1TB.
For computing low rank tensor approximations,
we use the function $cpd\_nls$ provided in
{\tt Tensorlab~3.0} \cite{7869679} to solve the nonlinear least squares optimization
\reff{least_squares_3}. The $\mathcal{X}^{gp}$ denotes the approximating tensor
returned by Algorithm~\ref{alg:LRTA} and
$\mathcal{X}^{opt}$ denotes the approximating tensor
obtained by solving \reff{least_squares_3},
with $\mathcal{X}^{gp}$ as the initial point.
In our numerical experiments,  if the rank $r$ is unknown,
we use the most square flattening matrix to estimate $r$
as in \eqref{ssc:flat:square} and Lemma \ref{cat_mat_lemma}.

\begin{exmp}
Consider the tensor $\mathcal{F}\in \mathbb{C}^{4 \times 4 \times 3}$
whose slices $\mathcal{F}_{:,:,1}, \mathcal{F}_{:,:,2}, \mathcal{F}_{:,:,3}$
are respectively
\begin{gather*}
\begin{bmatrix}
\begin{matrix*}[r]
27 & 25 & 35 & 42\\
48 & 68 & 80 & 80\\
26 & 24 & 34 & 40\\
33 & 41 & 49 & 66\\
\end{matrix*} & \rvline &
\begin{matrix*}[r]
44 & 32 & 52  & 56 \\
68 & 76 & 100 & 96 \\
42 & 30 & 50  & 52 \\
46 & 46 & 62  & 76 \\
\end{matrix*} &\rvline &
\begin{matrix*}[r]
42 & 26 & 48 & 45 \\
64 & 60 & 88 & 76 \\
47 & 27 & 53 & 47 \\
45 & 37 & 57 & 60 \\
\end{matrix*} \\
\end{bmatrix}.
\end{gather*}
By Lemma~\ref{cat_mat_lemma}, the estimated rank is $r=4$.
Applying Algorithm \ref{alg:TD:r<=n2} with $r=4$,
we get the rank-$4$ decomposition $\mF = U^{(1)} \circ U^{(2)} \circ U^{(3)}$,
with
\[
U^{(1)}=
\begin{bmatrix*}[r]
8 & 6  & 4  & 9  \\
8 & 12 & 16 & 12 \\
4 & 6  & 4  & 12 \\
4 & 12 & 8  & 9  \\
\end{bmatrix*},
~U^{(2)}=
\begin{bmatrix*}[r]
1 & 1  & 1  & 1  \\
\frac{1}{2} & 1  & 3  & \frac{1}{3}  \\
1 & 1  & 3  & 1  \\
1 & 4  & 1  & \frac{2}{3}  \\
\end{bmatrix*},
~U^{(3)}=
\begin{bmatrix*}[r]
1 & 1  & 1  & 1  \\
2 & 1  & 1  & 2  \\
1 & \frac{2}{3}  & \frac{3}{4}  & 3  \\
\end{bmatrix*}.
\]
\end{exmp}

\begin{exmp}
Consider the tensor in $\mathbb{C}^{5 \times 4 \times 3 \times 3}$
\begin{gather*} \mathcal{F}=
V^{(1)} \circ V^{(2)} \circ V^{(3)} \circ V^{(4)},
\end{gather*}
where the matrices $V^{(i)}$ are
\begin{gather*}
V^{(1)}=\begin{bmatrix*}[r]
	10 & 5 &-9 & -5 & 7\\
	8 & 6 & -3 & -9 & 7\\
	-9 & -1 & 7 & -3 & -1\\
	9 & -7 & -8 & 8 & -5\\
	-1 & 10 & 7 & -3 & 10 \\
	\end{bmatrix*}, \quad
V^{(2)}= \begin{bmatrix*}[r]
	-1 & 9 & -8 & 8 & 2\\
	0 & -1 & -4 & 6 & 8\\
	7 & -7 & -2 & 2 & 10\\
	2 & 10 & -3 & -1 & -3\\
	\end{bmatrix*},\\
V^{(3)}=
  \begin{bmatrix*}[r]
	5 & 2 & -2 & -7 & 3\\
	9 & -3 & -7 & 7 & -2\\
	0 & -10 & 10 & 6 & 10\\
	\end{bmatrix*}, \quad
V^{(4)}= \begin{bmatrix*}[r]
	8 & 2 & -7 & 10 & -5 \\
	4 & -8 & 4 & -6 & -10\\
	5 & 0 & 7 & -1 & -2\\
	\end{bmatrix*}.
\end{gather*}
By Lemma~\ref{cat_mat_lemma}, the estimated rank $r=5$.
Applying Algorithm \ref{alg:TD:r<=n2} with $r=5$,
we get the rank-$5$ tensor decomposition
$\mF = U^{(1)} \circ U^{(2)} \circ U^{(3)} \circ U^{(4)}$,
where the computed matrices $U^{(i)}$ are
\[
U^{(1)}=\begin{bmatrix*}[r]
-400 & 180     & 1008    & 2800   & -210    \\
-320  & 216     & 336     & 5040   & -210    \\
360   & -36     & -784    & 1680   & 30      \\
-360  & -252    & 896     & -4480  & 150     \\
40    & 360     & -784    & 1680   & -300    \\
\end{bmatrix*}, \quad
U^{(2)}=\begin{bmatrix*}[r]
1     & 1       & 1       & 1      & 1       \\
0     & -\frac{1}{9} & \frac{1}{2}    & \frac{3}{4}   & 4       \\
-7    & -\frac{7}{9} & \frac{1}{4}    & \frac{1}{4}   & 5       \\
-2    & \frac{10}{9}  & \frac{3}{8}   & -\frac{1}{8} & -\frac{3}{2}    \\
\end{bmatrix*},
\]
\[
U^{(3)}=\begin{bmatrix*}[r]
1     & 1       & 1       & 1      & 1       \\
\frac{9}{5}   & -\frac{3}{2}    & \frac{7}{2}     & -1     & -\frac{2}{3} \\
0     & -5      & -5      & -\frac{6}{7} & \frac{10}{3} \\
\end{bmatrix*}, \quad
U^{(4)}=\begin{bmatrix*}[r]
1     & 1       & 1       & 1      & 1       \\
\frac{1}{2}   & -4      & -\frac{4}{7} & -\frac{3}{5}   & 2       \\
\frac{5}{8} & 0       & -1      & -\frac{1}{10}   & \frac{2}{5}     \\
\end{bmatrix*}.
\]
\end{exmp}

\begin{exmp}
Consider the tensor $\mathcal{F} \in \mathbb{C}^{5\times 5 \times 4}$ such that
\begin{align*}
	\mathcal{F}_{i_1,i_2,i_3}=i_1+\frac{i_2}{2}+\frac{i_3}{3}+\sqrt{i_1^2+i_2^2+i_3^2}
\end{align*}
for  all $i_1,i_2,i_3$ in the corresponding range.
The $5$ biggest singular values of
the flattening matrix $\mbox{Flat}(\mathcal{F})$ are
\begin{align*}
   109.7393,~~
    5.2500,~~
    0.1068,~~
    8.325 \times 10^{-3},~~
    3.401 \times 10^{-4}.
\end{align*}
Applying Algorithm \ref{alg:LRTA} with rank $r=2,3,4,5$,
we get the approximation errors
\begin{center}
\begin{tabular}{ | c|c | c| c| c | } \hline
 $r$ & 2 & 3& 4& 5 \\
	\hline
	$\Vert \mathcal{F}-\mathcal{X}^{gp} \Vert$ & $5.1237\times 10^{-1}$ &
	$6.8647\times 10^{-2}$ & $1.0558\times 10^{-2}$ & $9.9449\times 10^{-3}$ \\
	\hline
	$\Vert \mathcal{F}-\mathcal{X}^{opt} \Vert$ & $1.5410\times 10^{-1}$ &
	$1.3754\times 10^{-2}$ & $2.6625\times 10^{-3}$ & $4.9002\times 10^{-4}$ \\
	\hline
\end{tabular}
\end{center}
For the case $r=3$,
the computed approximating tensor by Algorithm~\ref{alg:LRTA}
and by solving \eqref{least_squares_3} is
$U^{(1)} \circ U^{(2)} \circ U^{(3)}$, with
\begin{align*}
U^{(1)}=&
\begin{bmatrix*}[r]
   -0.4973 &  -7.6813 &  11.7465  \\
   -0.2525 &  -6.9651 &  12.4970  \\
   -0.0872 &  -6.0497 &  13.2858  \\
   -0.0132 &  -5.0521 &  14.1423  \\
   -0.0010 &  -4.0469 &  15.0771  \\
\end{bmatrix*},\quad
U^{(2)}= \begin{bmatrix*}[r]
    1.0000 &   1.0000 &   1.0000  \\
    0.5058 &   0.9211 &   1.0306  \\
    0.1713 &   0.8167 &   1.0649  \\
    0.0262 &   0.7003 &   1.1042  \\
    0.0136 &   0.5807 &   1.1490  \\
\end{bmatrix*}, \\
U^{(3)}=& \begin{bmatrix*}[r]
    1.0000 &   1.0000 &   1.0000  \\
    0.5075 &   0.9289 &   1.0216  \\
    0.1756 &   0.8323 &   1.0469  \\
    0.0399 &   0.7231 &   1.0771  \\
\end{bmatrix*}.
\end{align*}
\end{exmp}

\begin{exmp}
Consider the tensor $\mathcal{F} \in \mathbb{C}^{6\times 6 \times 6 \times 5 \times 4}$
such that
\[
\mathcal{F}_{i_1,i_2,i_3,i_4,i_5}= \
arctan(i_1 + 2i_2 + 3i_3 + 4i_4 + 5i_5),
\]
for all $i_1,i_2,i_3,i_4,i_5$ in the corresponding range.
The $5$ biggest singular values of the flattening matrix $\mbox{Flat}(\mathcal{F})$ are
\begin{align*}
101.71,~~~ 7.7529\times 10^{-2},~~~ 2.2870\times 10^{-3} ,
~~~7.2294\times 10^{-5},~~~2.0633\times 10^{-6}.
\end{align*}
Applying Algorithm \ref{alg:LRTA} with rank $r=2,3,4,5$,
we get the approximation errors as follows:
\begin{center}
\begin{tabular}{ | c|c | c| c| c|c|} \hline
 $r$  & 2 & 3 & 4 & 5\\ \hline
$\Vert \mathcal{F}-\mathcal{X}^{gp} \Vert$ & $9.8148 \times 10^{-3}$ &
$3.1987 \times 10^{-3}$ & $5.7945\times 10^{-3}$ & $1.0121 \times 10^{-5}$ \\ \hline
$\Vert \mathcal{F}-\mathcal{X}^{opt} \Vert$ &
$5.3111 \times 10^{-3}$ & $2.2623\times 10^{-4}$ & $3.0889 \times 10^{-5}$ &
$1.7523 \times 10^{-6}$\\ \hline
\end{tabular}
\end{center}
For the case $r=3$,
the computed approximating tensor by Algorithm~\ref{alg:LRTA}
and by solving \eqref{least_squares_3} is
$U^{(1)} \circ U^{(2)} \circ U^{(3)} \circ U^{(4)} \circ U^{(5)}$, with
\begin{align*}
 U^{(1)}=&\begin{bmatrix*}[r]
   -0.0134 &  -0.0347 &   1.5524  \\
   -0.0112 &  -0.0329 &   1.5525  \\
   -0.0094 &  -0.0312 &   1.5526  \\
   -0.0079 &  -0.0295 &   1.5527  \\
   -0.0066 &  -0.0280 &   1.5528  \\
   -0.0056 &  -0.0265 &   1.5529  \\
    \end{bmatrix*}, \quad
U^{(2)}=\begin{bmatrix*}[r]
    1.0000 &   1.0000 &   1.0000  \\
    0.7011 &   0.8992 &   1.0001  \\
    0.4939 &   0.8080 &   1.0003  \\
    0.3485 &   0.7260 &   1.0004  \\
    0.2459 &   0.6523 &   1.0006  \\
    0.1734 &   0.5861 &   1.0007  \\
    \end{bmatrix*},
\end{align*}
\begin{align*}
U^{(3)}=&\begin{bmatrix*}[r]
    1.0000 &   1.0000 &   1.0000  \\
    0.5886 &   0.8523 &   1.0002  \\
    0.3490 &   0.7258 &   1.0004  \\
    0.2064 &   0.6183 &   1.0006  \\
    0.1214 &   0.5269 &   1.0008  \\
    0.0715 &   0.4489 &   1.0011  \\
    \end{bmatrix*}, \quad
U^{(4)}=\begin{bmatrix*}[r]
    1.0000 &   1.0000 &   1.0000  \\
    0.4949 &   0.8078 &   1.0003  \\
    0.2463 &   0.6521 &   1.0006  \\
    0.1211 &   0.5269 &   1.0008  \\
    0.0596 &   0.4256 &   1.0011  \\
    \end{bmatrix*}, \\
U^{(5)}=&\begin{bmatrix*}[r]
    1.0000 &   1.0000 &   1.0000  \\
    0.4161 &   0.7656 &   1.0003  \\
    0.1730 &   0.5862 &   1.0007  \\
    0.0711 &   0.4489 &   1.0011  \\
\end{bmatrix*}.
\end{align*}
\end{exmp}

\begin{exmp}
As in Theorem~\ref{thm:aprox}, we have shown that if
the tensor to be approximated is sufficiently close to
a rank-$r$ tensor, then the computed rank-$r$ approximation
$\mathcal{X}^{gp}$ is quasi-optimal.
It can be further improved to a better approximation $\mathcal{X}^{opt}$
by solving the nonlinear optimization \reff{least_squares_3}.
In this example, we explore the numerical performance of
Algorithms~\ref{alg:LRTA} and \ref{alg:LRTA:reshape}
for computing low rank tensor approximations.
For the given dimensions $n_{1}, \ldots, n_{m}$,
we generate the tensor
\[
\mathcal{R} \, = \, \sum_{s=1}^{r}
 u^{s, 1} \otimes u^{s, 2} \otimes \cdots \otimes u^{s, m},
\]
where each $u^{s, j} \in \mathbb{C}^{n_{j}}$
is a complex vector whose real and imaginary parts
are generated randomly, obeying the Gaussian distribution.
We perturb $\mathcal{R}$ by another tensor $\mathcal{E}$,
whose entries are also generated with the Gaussian distribution.
We scale the perturbing tensor $\mathcal{E}$ to have a desired norm $\epsilon$.
The tensor $\mF$ is then generated as
\[
\mathcal{F} \, = \,  \mathcal{R}+\mathcal{E}.
\]
We choose $\epsilon$ to be one of $10^{-2}, 10^{-4}, 10^{-6}$,
and use the relative errors
\[
\text {\(\rho\)\_gp}= \frac{\left\|\mathcal{F}-\mathcal{X}^{gp}\right\|}
{\|\mathcal{E}\|},
\quad
\text {\(\rho\)\_opt }= \frac{\left\|\mathcal{F}-\mathcal{X}^{opt}\right\| }
{\|\mathcal{E}\|}
\]
to measure the approximation quality of
$\mathcal{X}^{\text {gp }}$, $\mathcal{X}^{\text {opt }}$ respectively.
For each case of $(n_1, \ldots, n_m), r$ and $\epsilon$,
we generate $10$ random instances of $\mathcal{R}, \mathcal{F}, \mathcal{E}$.
For the case $(n_1, \ldots, n_m) = (20,20,20,20,10)$,
Algorithm~\ref{alg:LRTA:reshape} is used to compute $\mathcal{X}^{gp}$.
All other cases are solved by Algorithm~\ref{alg:LRTA}.
The computational results are reported in Tables~\ref{Table:LRTA}.
For each case of $(n_{1}, \ldots, n_{m})$ and $r$,
we also list the median of above relative errors
and the average CPU time (in seconds).
The \text{t\_gp} and \text{t\_opt} denote the average CPU time (in seconds)
for Algorithms~\ref{alg:LRTA}/\ref{alg:LRTA:reshape} and for
solving \reff{least_squares_3} respectively.

\begin{table}
\caption{Computational performance of Algorithms~\ref{alg:LRTA} and \ref{alg:LRTA:reshape} and of nonlinear optimization \reff{least_squares_3}.}
\label{Table:LRTA}
	\resizebox{\textwidth}{!}{
\begin{tabular}{|cccccc|cccccc|}\hline
	\multicolumn{1}{|c|}{$r$}                   & \multicolumn{1}{c|}{$\epsilon$} & \multicolumn{1}{c|}{\(\rho\)\_{gp}}  & \multicolumn{1}{c|}{t\_{gp}} & \multicolumn{1}{c|}{\(\rho\)\_{opt}} & t\_{opt}   & \multicolumn{1}{c|}{$r$}                   & \multicolumn{1}{c|}{$\epsilon$} & \multicolumn{1}{c|}{\(\rho\)\_{gp}}  & \multicolumn{1}{c|}{t\_{gp}} & \multicolumn{1}{c|}{\(\rho\)\_{opt}} & \multicolumn{1}{c|}{t\_{opt}}    \\ \hline
	
	\multicolumn{6}{|c|}{($n_1$,$n_2$,$n_3$)=(50,50,50)}  &
	\multicolumn{6}{|c|}{($n_1$,$n_2$,$n_3$)=(60, 50, 40)} \\ \hline	
	
	\multicolumn{1}{|c|}{\multirow{3}{*}{10}} &  \multicolumn{1}{c|}{$10^{-2}$} & \multicolumn{1}{c|}{1.63}& \multicolumn{1}{c|}{0.08}& \multicolumn{1}{c|}{0.99}& \multicolumn{1}{c|}{1.57}& \multicolumn{1}{|c|}{\multirow{3}{*}{15}} &  \multicolumn{1}{c|}{$10^{-2}$}  & \multicolumn{1}{c|}{17.49}& \multicolumn{1}{c|}{0.19}& \multicolumn{1}{c|}{0.99}& \multicolumn{1}{c|}{2.17}\\ \cline{2-6}\cline{8-12}
	\multicolumn{1}{|c|}{} &  \multicolumn{1}{c|}{$10^{-4}$} & \multicolumn{1}{c|}{6.32}& \multicolumn{1}{c|}{0.10}& \multicolumn{1}{c|}{0.99}& \multicolumn{1}{c|}{1.16}&  \multicolumn{1}{|c|}{} &  \multicolumn{1}{c|}{$10^{-4}$}  & \multicolumn{1}{c|}{10.80}& \multicolumn{1}{c|}{0.15}& \multicolumn{1}{c|}{0.99}& \multicolumn{1}{c|}{1.36}\\ \cline{2-6}\cline{8-12}
	\multicolumn{1}{|c|}{} &  \multicolumn{1}{c|}{$10^{-6}$} & \multicolumn{1}{c|}{3.84}& \multicolumn{1}{c|}{0.09}& \multicolumn{1}{c|}{0.99}& \multicolumn{1}{c|}{0.83}&  \multicolumn{1}{|c|}{} &  \multicolumn{1}{c|}{$10^{-6}$}  & \multicolumn{1}{c|}{5.16}& \multicolumn{1}{c|}{0.20}& \multicolumn{1}{c|}{0.99}& \multicolumn{1}{c|}{1.10}\\ \hline
	\multicolumn{1}{|c|}{\multirow{3}{*}{20}} &  \multicolumn{1}{c|}{$10^{-2}$} & \multicolumn{1}{c|}{25.83}& \multicolumn{1}{c|}{0.29}& \multicolumn{1}{c|}{0.99}& \multicolumn{1}{c|}{2.99}& \multicolumn{1}{|c|}{\multirow{3}{*}{30}} &  \multicolumn{1}{c|}{$10^{-2}$}  & \multicolumn{1}{c|}{28.70}& \multicolumn{1}{c|}{0.40}& \multicolumn{1}{c|}{0.98}& \multicolumn{1}{c|}{6.95}\\ \cline{2-6}\cline{8-12}
	\multicolumn{1}{|c|}{} &  \multicolumn{1}{c|}{$10^{-4}$} & \multicolumn{1}{c|}{5.41}& \multicolumn{1}{c|}{0.28}& \multicolumn{1}{c|}{0.99}& \multicolumn{1}{c|}{1.99}&  \multicolumn{1}{|c|}{} &  \multicolumn{1}{c|}{$10^{-4}$}  & \multicolumn{1}{c|}{15.77}& \multicolumn{1}{c|}{0.37}& \multicolumn{1}{c|}{0.98}& \multicolumn{1}{c|}{3.61}\\ \cline{2-6}\cline{8-12}
	\multicolumn{1}{|c|}{} &  \multicolumn{1}{c|}{$10^{-6}$} & \multicolumn{1}{c|}{30.41}& \multicolumn{1}{c|}{0.29}& \multicolumn{1}{c|}{0.99}& \multicolumn{1}{c|}{1.49}&  \multicolumn{1}{|c|}{} &  \multicolumn{1}{c|}{$10^{-6}$}  & \multicolumn{1}{c|}{50.96}& \multicolumn{1}{c|}{0.37}& \multicolumn{1}{c|}{0.98}& \multicolumn{1}{c|}{2.27}\\ \hline
	\multicolumn{1}{|c|}{\multirow{3}{*}{30}} &  \multicolumn{1}{c|}{$10^{-2}$} & \multicolumn{1}{c|}{27.91}& \multicolumn{1}{c|}{0.50}& \multicolumn{1}{c|}{0.98}& \multicolumn{1}{c|}{7.08}& \multicolumn{1}{|c|}{\multirow{3}{*}{45}} &  \multicolumn{1}{c|}{$10^{-2}$}  & \multicolumn{1}{c|}{35.48}& \multicolumn{1}{c|}{0.61}& \multicolumn{1}{c|}{0.97}& \multicolumn{1}{c|}{25.73}\\ \cline{2-6}\cline{8-12}
	\multicolumn{1}{|c|}{} &  \multicolumn{1}{c|}{$10^{-4}$} & \multicolumn{1}{c|}{213.82}& \multicolumn{1}{c|}{0.43}& \multicolumn{1}{c|}{0.98}& \multicolumn{1}{c|}{3.73}&  \multicolumn{1}{|c|}{} &  \multicolumn{1}{c|}{$10^{-4}$}  & \multicolumn{1}{c|}{35.03}& \multicolumn{1}{c|}{0.63}& \multicolumn{1}{c|}{0.97}& \multicolumn{1}{c|}{8.08}\\ \cline{2-6}\cline{8-12}
	\multicolumn{1}{|c|}{} &  \multicolumn{1}{c|}{$10^{-6}$} & \multicolumn{1}{c|}{17.97}& \multicolumn{1}{c|}{0.47}& \multicolumn{1}{c|}{0.98}& \multicolumn{1}{c|}{2.20}&  \multicolumn{1}{|c|}{} &  \multicolumn{1}{c|}{$10^{-6}$}  & \multicolumn{1}{c|}{34.67}& \multicolumn{1}{c|}{0.61}& \multicolumn{1}{c|}{0.97}& \multicolumn{1}{c|}{5.69}\\ \hline
	
	\multicolumn{6}{|c|}{($n_1$,$n_2$,$n_3$)=(100,100,100)}  &
	\multicolumn{6}{|c|}{($n_1$,$n_2$,$n_3$)=(150,150,150)} \\ \hline

	\multicolumn{1}{|c|}{\multirow{3}{*}{20}} &  \multicolumn{1}{c|}{$10^{-2}$} & \multicolumn{1}{c|}{11.21}& \multicolumn{1}{c|}{0.86}& \multicolumn{1}{c|}{1.00}& \multicolumn{1}{c|}{6.36}& \multicolumn{1}{|c|}{\multirow{3}{*}{30}} &  \multicolumn{1}{c|}{$10^{-2}$}  & \multicolumn{1}{c|}{8.59}& \multicolumn{1}{c|}{2.92}& \multicolumn{1}{c|}{1.00}& \multicolumn{1}{c|}{17.17}\\ \cline{2-6}\cline{8-12}
	\multicolumn{1}{|c|}{} &  \multicolumn{1}{c|}{$10^{-4}$} & \multicolumn{1}{c|}{3.48}& \multicolumn{1}{c|}{0.85}& \multicolumn{1}{c|}{1.00}& \multicolumn{1}{c|}{4.24}&  \multicolumn{1}{|c|}{} &  \multicolumn{1}{c|}{$10^{-4}$}  & \multicolumn{1}{c|}{3.18}& \multicolumn{1}{c|}{3.05}& \multicolumn{1}{c|}{1.00}& \multicolumn{1}{c|}{11.20}\\ \cline{2-6}\cline{8-12}
	\multicolumn{1}{|c|}{} &  \multicolumn{1}{c|}{$10^{-6}$} & \multicolumn{1}{c|}{3.88}& \multicolumn{1}{c|}{0.83}& \multicolumn{1}{c|}{1.00}& \multicolumn{1}{c|}{3.20}&  \multicolumn{1}{|c|}{} &  \multicolumn{1}{c|}{$10^{-6}$}  & \multicolumn{1}{c|}{4.24}& \multicolumn{1}{c|}{3.42}& \multicolumn{1}{c|}{1.00}& \multicolumn{1}{c|}{11.75}\\ \hline
	\multicolumn{1}{|c|}{\multirow{3}{*}{40}} &  \multicolumn{1}{c|}{$10^{-2}$} & \multicolumn{1}{c|}{24.17}& \multicolumn{1}{c|}{1.76}& \multicolumn{1}{c|}{0.99}& \multicolumn{1}{c|}{17.80}& \multicolumn{1}{|c|}{\multirow{3}{*}{60}} &  \multicolumn{1}{c|}{$10^{-2}$}  & \multicolumn{1}{c|}{49.80}& \multicolumn{1}{c|}{6.04}& \multicolumn{1}{c|}{1.00}& \multicolumn{1}{c|}{87.31}\\ \cline{2-6}\cline{8-12}
	\multicolumn{1}{|c|}{} &  \multicolumn{1}{c|}{$10^{-4}$} & \multicolumn{1}{c|}{11.60}& \multicolumn{1}{c|}{1.65}& \multicolumn{1}{c|}{0.99}& \multicolumn{1}{c|}{11.02}&  \multicolumn{1}{|c|}{} &  \multicolumn{1}{c|}{$10^{-4}$}  & \multicolumn{1}{c|}{13.77}& \multicolumn{1}{c|}{5.89}& \multicolumn{1}{c|}{1.00}& \multicolumn{1}{c|}{24.96}\\ \cline{2-6}\cline{8-12}
	\multicolumn{1}{|c|}{} &  \multicolumn{1}{c|}{$10^{-6}$} & \multicolumn{1}{c|}{11.09}& \multicolumn{1}{c|}{1.61}& \multicolumn{1}{c|}{0.99}& \multicolumn{1}{c|}{7.97}&  \multicolumn{1}{|c|}{} &  \multicolumn{1}{c|}{$10^{-6}$}  & \multicolumn{1}{c|}{17.49}& \multicolumn{1}{c|}{6.07}& \multicolumn{1}{c|}{1.00}& \multicolumn{1}{c|}{18.81}\\ \hline
	\multicolumn{1}{|c|}{\multirow{3}{*}{60}} &  \multicolumn{1}{c|}{$10^{-2}$} & \multicolumn{1}{c|}{18.71}& \multicolumn{1}{c|}{3.40}& \multicolumn{1}{c|}{0.99}& \multicolumn{1}{c|}{28.16}& \multicolumn{1}{|c|}{\multirow{3}{*}{90}} &  \multicolumn{1}{c|}{$10^{-2}$}  & \multicolumn{1}{c|}{29.44}& \multicolumn{1}{c|}{10.64}& \multicolumn{1}{c|}{0.99}& \multicolumn{1}{c|}{98.78}\\ \cline{2-6}\cline{8-12}
	\multicolumn{1}{|c|}{} &  \multicolumn{1}{c|}{$10^{-4}$} & \multicolumn{1}{c|}{26.28}& \multicolumn{1}{c|}{3.41}& \multicolumn{1}{c|}{0.99}& \multicolumn{1}{c|}{17.25}&  \multicolumn{1}{|c|}{} &  \multicolumn{1}{c|}{$10^{-4}$}  & \multicolumn{1}{c|}{152.49}& \multicolumn{1}{c|}{10.53}& \multicolumn{1}{c|}{0.99}& \multicolumn{1}{c|}{43.58}\\ \cline{2-6}\cline{8-12}
	\multicolumn{1}{|c|}{} &  \multicolumn{1}{c|}{$10^{-6}$} & \multicolumn{1}{c|}{19.12}& \multicolumn{1}{c|}{3.49}& \multicolumn{1}{c|}{0.99}& \multicolumn{1}{c|}{13.14}&  \multicolumn{1}{|c|}{} &  \multicolumn{1}{c|}{$10^{-6}$}  & \multicolumn{1}{c|}{17.01}& \multicolumn{1}{c|}{10.06}& \multicolumn{1}{c|}{0.99}& \multicolumn{1}{c|}{26.98}\\ \hline
	
	\multicolumn{6}{|c|}{($n_1$,$n_2$,$n_3$,$n_4$)=(20,20,20,20,10)} &
	\multicolumn{6}{|c|}{($n_1$,$n_2$,$n_3$,$n_4$)=(60,50,40,30)}\\ \hline
	
	\multicolumn{1}{|c|}{\multirow{3}{*}{24}} &  \multicolumn{1}{c|}{$10^{-2}$} & \multicolumn{1}{c|}{37.93}& \multicolumn{1}{c|}{0.88}& \multicolumn{1}{c|}{1.00}& \multicolumn{1}{c|}{45.56}& \multicolumn{1}{|c|}{\multirow{3}{*}{20}} &  \multicolumn{1}{c|}{$10^{-2}$}  & \multicolumn{1}{c|}{31.42}& \multicolumn{1}{c|}{2.78}& \multicolumn{1}{c|}{1.00}& \multicolumn{1}{c|}{31.16}\\ \cline{2-6}\cline{8-12}
	\multicolumn{1}{|c|}{} &  \multicolumn{1}{c|}{$10^{-4}$} & \multicolumn{1}{c|}{9.10}& \multicolumn{1}{c|}{0.92}& \multicolumn{1}{c|}{1.00}& \multicolumn{1}{c|}{15.86}&  \multicolumn{1}{|c|}{} &  \multicolumn{1}{c|}{$10^{-4}$}  & \multicolumn{1}{c|}{1.17}& \multicolumn{1}{c|}{2.76}& \multicolumn{1}{c|}{1.00}& \multicolumn{1}{c|}{9.39}\\ \cline{2-6}\cline{8-12}
	\multicolumn{1}{|c|}{} &  \multicolumn{1}{c|}{$10^{-6}$} & \multicolumn{1}{c|}{715.16}& \multicolumn{1}{c|}{0.91}& \multicolumn{1}{c|}{1.00}& \multicolumn{1}{c|}{15.63}&  \multicolumn{1}{|c|}{} &  \multicolumn{1}{c|}{$10^{-6}$}  & \multicolumn{1}{c|}{4.14}& \multicolumn{1}{c|}{2.79}& \multicolumn{1}{c|}{1.00}& \multicolumn{1}{c|}{9.48}\\ \hline
	\multicolumn{1}{|c|}{\multirow{3}{*}{48}} &  \multicolumn{1}{c|}{$10^{-2}$} & \multicolumn{1}{c|}{166.00}& \multicolumn{1}{c|}{1.95}& \multicolumn{1}{c|}{1.00}& \multicolumn{1}{c|}{270.56}& \multicolumn{1}{|c|}{\multirow{3}{*}{40}} &  \multicolumn{1}{c|}{$10^{-2}$}  & \multicolumn{1}{c|}{6.99}& \multicolumn{1}{c|}{7.52}& \multicolumn{1}{c|}{1.00}& \multicolumn{1}{c|}{31.81}\\ \cline{2-6}\cline{8-12}
	\multicolumn{1}{|c|}{} &  \multicolumn{1}{c|}{$10^{-4}$} & \multicolumn{1}{c|}{161.62}& \multicolumn{1}{c|}{1.93}& \multicolumn{1}{c|}{1.00}& \multicolumn{1}{c|}{40.63}&  \multicolumn{1}{|c|}{} &  \multicolumn{1}{c|}{$10^{-4}$}  & \multicolumn{1}{c|}{2.58}& \multicolumn{1}{c|}{7.32}& \multicolumn{1}{c|}{1.00}& \multicolumn{1}{c|}{20.07}\\ \cline{2-6}\cline{8-12}
	\multicolumn{1}{|c|}{} &  \multicolumn{1}{c|}{$10^{-6}$} & \multicolumn{1}{c|}{52.01}& \multicolumn{1}{c|}{1.93}& \multicolumn{1}{c|}{1.00}& \multicolumn{1}{c|}{21.71}&  \multicolumn{1}{|c|}{} &  \multicolumn{1}{c|}{$10^{-6}$}  & \multicolumn{1}{c|}{2.49}& \multicolumn{1}{c|}{7.22}& \multicolumn{1}{c|}{1.00}& \multicolumn{1}{c|}{20.22}\\ \hline
	\multicolumn{1}{|c|}{\multirow{3}{*}{72}} &  \multicolumn{1}{c|}{$10^{-2}$} & \multicolumn{1}{c|}{73.70}& \multicolumn{1}{c|}{3.10}& \multicolumn{1}{c|}{1.00}& \multicolumn{1}{c|}{102.90}& \multicolumn{1}{|c|}{\multirow{3}{*}{60}} &  \multicolumn{1}{c|}{$10^{-2}$}  & \multicolumn{1}{c|}{11.48}& \multicolumn{1}{c|}{9.83}& \multicolumn{1}{c|}{1.00}& \multicolumn{1}{c|}{48.08}\\ \cline{2-6}\cline{8-12}
	\multicolumn{1}{|c|}{} &  \multicolumn{1}{c|}{$10^{-4}$} & \multicolumn{1}{c|}{113.13}& \multicolumn{1}{c|}{3.06}& \multicolumn{1}{c|}{1.00}& \multicolumn{1}{c|}{70.13}&  \multicolumn{1}{|c|}{} &  \multicolumn{1}{c|}{$10^{-4}$}  & \multicolumn{1}{c|}{6.38}& \multicolumn{1}{c|}{9.80}& \multicolumn{1}{c|}{1.00}& \multicolumn{1}{c|}{38.97}\\ \cline{2-6}\cline{8-12}
	\multicolumn{1}{|c|}{} &  \multicolumn{1}{c|}{$10^{-6}$} & \multicolumn{1}{c|}{34.28}& \multicolumn{1}{c|}{3.03}& \multicolumn{1}{c|}{1.00}& \multicolumn{1}{c|}{36.72}&  \multicolumn{1}{|c|}{} &  \multicolumn{1}{c|}{$10^{-6}$}  & \multicolumn{1}{c|}{16.35}& \multicolumn{1}{c|}{9.76}& \multicolumn{1}{c|}{1.00}& \multicolumn{1}{c|}{30.38}\\ \hline
\end{tabular}}
\end{table}

\end{exmp}

In the following, we give a comparison with
the generalized eigenvalue decomposition (GEVD) method,
which is a classical one for computing tensor decompositions
when the rank $r \leq n_2$.  We refer to
\cite{Leurgans1993decomposition,Sanchez1990Tensorial}
for the work about the GEVD method.
Consider a cubic order tensor $\mF \in \bbC^{n_1 \times n_2 \times n_3}$
with $n_1 \geq n_2 \geq n_3$.
Suppose $\mF = U^{(1)} \circ U^{(2)} \circ U^{(3)}$
is a rank-$r$ decomposition and $r \le n_2$.
Assume its first and second decomposing matrices $U^{(1)}, U^{(2)}$
have full column ranks and the third decomposing matrix
$U^{(3)}$ does not have colinear columns.
Denote the slice matrices
\be  \label{gevd_f1f2}
 F_1  \, \coloneqq \,  \mathcal{F}_{1:r,1:r,1} ,\quad
 F_2  \, \coloneqq \,  \mathcal{F}_{1:r,1:r,2}.
\ee
One can show that
\begin{align}
F_1=U^{(1)}_{1:r,:} \cdot \mbox{diag}(U^{(3)}_{1,:}) \cdot (U^{(2)}_{1:r,:})^T, \quad
F_2=U^{(1)}_{1:r,:} \cdot \mbox{diag}(U^{(3)}_{2,:}) \cdot (U^{(2)}_{1:r,:})^T.
\end{align}
This implies that the columns of $(U^{(1)}_{1:r,r})^{-T}$
are generalized eigenvectors of the matrix pair $(F_1^{T}, F_2^T)$.
Consider the transformed tensor
\be  \label{gevd:fhat}
\hat{\mathcal{F}} \, = \, (U^{(1)}_{1:r,r})^{-1}\times_1 \mathcal{F}_{1:r,:,:},
\ee
For each $s=1,\ldots,r$,  the slice
$\hat{\mathcal{F}}_{s,:,:}=U^{(2)}_{:,s} \cdot (U^{(3)}_{:,s})^T$
is a rank-$1$ matrix.
The matrices $U^{(2)}$, $U^{(3)}$ can be obtained
by computing rank-$1$ decompositions
for the slices $\hat{\mathcal{F}}_{s,:,:}$.
After this is done, we can solve the linear system
\be   \label{gevd:least_squares}
U^{(1)} \circ U^{(2)} \circ U^{(3)} = \mF
\ee
to get the matrix $U^{(1)}$.
The following is the GEVD method for
computing cubic order tensor decompositions
when the rank $r \le n_2$.

\begin{alg}
(The GEVD method.)\,
\begin{itemize}

\item[Input:] A tensor $\mF \in \bbC^{n_1 \times n_2 \times n_3}$
with the rank $r \leq n_2$.
%
%
%
\item[1.] Formulate the tensor $\hat{\mathcal{F}}$ as in \eqref{gevd:fhat}.

\item[2.] For $s=1,\ldots,r,$ compute $U^{(2)}_{:,s}$, $U^{(3)}_{:,s}$ from
the rank-$1$ decomposition of the matrix $\hat{\mathcal{F}}_{s,:,:}$.

\item[3.] Solve the linear system \eqref{gevd:least_squares} to get $U^{(1)}$.

\item [Output:] The decomposing matrices $U^{(1)},U^{(2)},U^{(3)}.$
\end{itemize}
\label{algorithm:gevd}
\end{alg}

We compare the performance of Algorithm~\ref{alg:TD:r<=n2}
and Algorithm~\ref{algorithm:gevd}
for randomly generated tensors with the rank $r \le n_2$.
We generate $\mF = U^{(1)} \circ U^{(2)} \circ U^{(3)}$
such that each $U^{(i)} \in \mathbb{C}^{n_i \times r}$.
The entries of $U^{(i)}$ are randomly generated complex numbers.
Their real and imaginary parts are randomly generated,
obeying the Gaussian distribution.
For each case of $(n_1,...,n_m)$ and $r$,
we generate $20$ random instances of $\mF$.
Algorithm~\ref{algorithm:gevd} is implemented by the function
$cpd\_gevd$ in the software {\tt Tensorlab}.
All the tensor decompositions are computed correctly by both methods.
The average CPU time (in seconds) for Algorithm~\ref{alg:TD:r<=n2}
is denoted as {\tt time-gp},
while the average CPU time for the GEVD method
is denoted as {\tt time-gevd}.
%
%
The computational results are reported in Table~\ref{Table:TD}.
The numerical experiments show that
Algorithm~\ref{alg:TD:r<=n2} is more computationally efficient
than Algorithm~\ref{algorithm:gevd}.

\begin{table}[htp]
\caption{A comparison for the performance of
Algorithms~\ref{alg:TD:r<=n2} and \ref{algorithm:gevd}.}
\label{Table:TD}
\begin{tabular}{|l|c|r|r|} \hline
($n_1$,$n_2$,$n_3$)  & $r$   & {\tt time-gevd} & {\tt time-gp}  \\ \hline
(40,30,30) & 30 & 0.91 &0.29\\\hline
(50,50,50) & 50 & 4.77 &0.85\\\hline
(100,100,100) & 80 & 12.17 &5.54\\\hline
(150,150,150) & 100 & 79.85 &13.30\\\hline
(200,200,200) & 120 & 161.83 &25.71\\\hline
(250,250,250) & 140 & 285.03 &55.71\\\hline
(300,300,300) & 100 & 306.64 &61.38\\\hline
(400,400,400) & 180 & 934.15 &271.21\\\hline
(500,500,500) & 200 & 1688.98 &539.75\\\hline
\end{tabular}
\end{table}

\section{Conclusions}
\label{sec:con}.

This paper gives computational methods for
computing low rank tensor decompositions and approximations.
The proposed methods are based on generating polynomials.
For a generic tensor of rank $r\leq \mathrm{min}(n_1,N_3)$,
its tensor decomposition can be obtained by Algorithm~\ref{alg:TD:r<=n2} .
Under some general assumptions, we show that if
a tensor is sufficiently close to a low rank one,
then the low rank approximating tensor produced by Algorithm~\ref{alg:LRTA}
is quasi-optimal. Numerical experiments are
presented to show the efficiency of the proposed methods.

\bigskip \noindent
{\bf Acknowledgement}
Jiawang Nie is partially supported by the NSF grant DMS-2110780.
Li Wang is partially supported by the NSF grant DMS-2009689.


\begin{thebibliography}{99}

%
%

%
%

%
%


\bibitem{BreVan18}
P. Breiding and N. Vannieuwenhoven,
A Riemannian trust region method for the canonical tensor rank approximation problem,
{\em SIAM J. Optim.}
28(3), 2435--2465, 2018.



\bibitem{chatelin2012}
F.~Chatelin,
Eigenvalues of Matrices: revised edition,
{\em SIAM}, 2012.




\bibitem{doi:10.1137/140961389}
L. Chiantini, G. Ottaviani, and N. Vannieuwenhoven,
An algorithm for generic and low rank specific identifiability of complex tensors,
{\em SIAM J. Matrix Anal. Appl.}
35(4), 1265--1287, 2014.



\bibitem{Chiantini2017}
L. Chiantini, G. Ottaviani, and N. Vannieuwenhoven,
Effective criteria for specific identifiability of tensors and forms,
{\em SIAM J. Matrix Anal. Appl.}
38(2), 656--681, 2017.

%
%

%
%

\bibitem{CLdA09}
P. Comon, X. Luciani, and A. L. F. de Almeida,
Tensor decompositions, alternating least squares and other tales,
{\it J. of Chemometrics}
23, 393--405, 2009.




\bibitem{cox2013ideals}
D. Cox, J. Little, and D.~O'Shea,
{\em Ideals, varieties, and algorithms: an introduction to
  computational algebraic geometry and commutative algebra},
Springer Science \& Business Media, 2013.



\bibitem{CDN14}
C-F.~Cui, Y-H.~Dai and J.~Nie,
All real eigenvalues of symmetric tensors,
{\em SIAM J. Matrix Anal. Appl.}
35(4), 1582--1601, 2014.




\bibitem{de2000best}
L.~de~Lathauwer, B.~de~Moor and J.~Vandewalle,
On the best rank-$1$ and rank-$(R_1, R_2, \ldots, R_n)$
approximation of higher-order tensors,
{\em SIAM J. Matrix Anal. Appl.}
21(4), 1324--1342, 2000.



\bibitem{dLMV04}
L.~de~Lathauwer, B.~de~Moor and J.~Vandewalle,
Computation of the canonical decomposition by means
of a simultaneous generalized Schur decomposition,
{\em SIAM J. Matrix Anal. Appl.}
26(2), 295--327, 2004.




\bibitem{dLa06}
L.~de~Lathauwer,
A link between the canonical decomposition in
multilinear algebra and simultaneous matrix diagonalization,
{\em SIAM J. Matrix Anal. Appl.}
28(3), 642--666, 2006.



\bibitem{de2008tensor}
V. de~Silva and L.-H.~Lim,
Tensor rank and the ill-posedness of the best low rank approximation problem,
{\em SIAM J. Matrix Anal. Appl.}
30(3), 1084--1127, 2008.





\bibitem{Demmel}
J.~Demmel,
{\em Applied Numerical Linear Algebra},
SIAM, 1997.


\bibitem{Domanov2014}
I. Domanov and L.~de Lathauwer,
Canonical polyadic decomposition of third-order tensors:
Reduction to generalized eigenvalue decomposition,
{\em SIAM J. Matrix Anal. Appl.}
35(2), 636–660, 2014.


%
%
%

\bibitem{FNZ18}
J.~Fan, J.~Nie and A.~Zhou,
Tensor eigenvalue complementarity problems,
\emph{Mathematical Programming},
170(2), 507--539, 2018.


%
%


\bibitem{FriTam14}
S.~Friedland and V.~Tammali,
Low-rank approximation of tensors,
{\it Numerical algebra, matrix theory,
differential-algebraic equations and control theory},
377-411, Springer, 2015.



\bibitem{FriWang20}
S. Friedland and L.~Wang,
Spectral norm of a symmetric tensor and its computation,
{\em Mathematics of Computation}
89, 2175--2215, 2020.


%
%

\bibitem{guan2018convergence}
Y.~Guan, M.~T. Chu, and D. Chu,
Convergence analysis of an svd-based algorithm for the best rank-$1$ tensor approximation,
{\em Linear Algebra and its Applications}
555, 53--69, 2018.

\bibitem{guo2021learning}
B. Guo, J. Nie, and Z.~Yang,
Learning diagonal gaussian mixture models and incomplete tensor decompositions,
{\it Vietnam J. Math.} (2021).
\url{doi.org/10.1007/s10013-021-00534-3}


%
%

\bibitem{QiHu19}
L. Qi and S. Hu,
Spectral norm and nuclear norm of a third order tensor,
{\em Preprint}, 2019.
\url{arXiv:1909.01529}



\bibitem{kolda2009tensor}
T.~G.~Kolda and B.~W.~Bader,
Tensor decompositions and applications,
{\em SIAM Review}\, 51(3):455--500, 2009.




\bibitem{Kruskal1977ThreeWayArrays}
J.~B.~Kruskal,
Three-way arrays: rank and uniqueness of trilinear decompositions, with
application to arithmetic complexity and statistics,
{\em Linear Algebra Appl}
18(2), 95--138, 1977.


\bibitem{Land12}
J.~Landsberg,
{\em Tensors: Geometry and Applications},
Grad. Stud. Math.,  128,  AMS,
Providence, RI, 2012.


%
%


\bibitem{Leurgans1993decomposition}
S.E. Leurgans, R.T. Ross and R.B. Abel,
A decomposition for three-way arrays,
{\em SIAM J. Matrix Anal. Appl.}
14, 1064--1083, 1993.



\bibitem{Lim13}
L.-H.~Lim,
{\em Tensors and hypermatrices, in: L. Hogben (Ed.)},
Handbook of linear algebra (2nd Ed.),
CRC Press, Boca Raton, 2013.


%
%


\bibitem{MYY20}
X.~Mao, G.~Yuan and Y.~Yang,
A self-adaptive regularized alternating least squares method
for tensor decomposition problems,
\emph{Analysis and Applications}
18 (01), 129--147, 2020.





\bibitem{GPSTD}
J. Nie,
Generating polynomials and symmetric tensor decompositions,
{\em Found. Comput. Math.}
17(2), 423--465, 2017.

\bibitem{LowRankSymmetric}
J. Nie,
Low rank symmetric tensor approximations,
{\em SIAM J. Matrix Anal. Appl.}
38(4), 1517--1540, 2017.


\bibitem{NieLR14}
J. Nie,
Nearly Low Rank Tensors and Their Approximations,
{\em Preprint}, 2014.
\url{arXiv:1412.7270}.


\bibitem{nie2014semidefinite}
J. Nie and L.~Wang,
Semidefinite relaxations for best rank-$1$ tensor approximations,
{\em SIAM J. Matrix Anal. Appl.}
35(3), 1155--1179, 2014.


\bibitem{nie2020hermitian}
J. Nie and Z.~Yang,
Hermitian tensor decompositions,
{\em SIAM J. Matrix Anal. Appl.}
41(3), 1115--1144, 2020.




\bibitem{NieZhang18}
J.~Nie and X.~Zhang,
Real eigenvalues of nonsymmetric tensors,
{\em Comp. Opt. and Appl.},
70(1), 1--32, 2018.


\bibitem{NYZ18}
J. Nie, Z. Yang and X. Zhang,
A complete semidefinite algorithm for detecting copositive matrices and tensors,
\emph{SIAM J. Optim.}
28(4), 2902--2921, 2018.


\bibitem{NWZ22}
J. Nie, L. Wang, and Z. Zheng.
Higher Order Correlation Analysis for Multi-View Learning,
{\em Pacific Journal of Optimization}, to appear, 2022.


%
%


\bibitem{Sanchez1990Tensorial}
E.~Sanchez and B.~R.~Kowalski,
Tensorial resolution: A direct trilinear decomposition,
{\em  J. Chemometrics}
4, 29--45, 1990.

%
%

\bibitem{schafarevich1988basic}
I. Schafarevich,
\emph{Basic Algebraic Geometry I: Varieties in Projective Space},
Springer-Verlag, 1988.



\bibitem{SvBdL13}
L. Sorber, M. van~Barel, and L. de~Lathauwer,
\newblock Optimization-based algorithms for tensor decompositions:
Canonical polyadic decomposition, decomposition in
rank-$(L_r,L_r,1)$ terms, and a new generalization,
{\em SIAM J. Optim.}
23(2), 695--720, 2013.



\bibitem{tensorlab}
N. Vervliet, O. Debals, L. Sorber, M. van Barel, and L. de Lathauwer,
{\em Tensorlab 3.0}, March 2016.


\bibitem{telen2021normal}
S. Telen and N. Vannieuwenhoven,
Normal forms for tensor rank decomposition,
{\it Preprint}, 2021.
\url{arXiv:2103.07411}



\bibitem{7869679}
N.~Vervliet, O.~Debals and L.~de~Lathauwer,
Tensorlab~3.0 — numerical optimization strategies for large-scale
constrained and coupled matrix/tensor factorization,
In {\em 2016 50th Asilomar Conference on Signals, Systems and Computers},
1733--1738, 2016.


\bibitem{YYang20}
Y.~Yang,
The epsilon-alternating least squares for orthogonal low-rank tensor approximation
and its global convergence,
{\em SIAM J. Matrix Anal. Appl.}
41(4), 1797--1825, 2020.


\end{thebibliography}

\end{document}